\newcommand{\LeftEqNo}{\let\veqno\@@leqno}
\theoremstyle{definition}
\newtheorem{thm}{Theorem}
\newtheorem{rem}[thm]{Remark}
\newtheorem{lem}[thm]{Lemma}
\newtheorem{prop}[thm]{Proposition}
\newtheorem{cor}[thm]{Corollary}
\newcommand\R{{\mathbb{R}}}
\newcommand\N{\mathbb{N}}
\newcommand\Z{\mathbb{Z}}
\newcommand\eps{\varepsilon}
\renewcommand\a{\alpha}
\newcommand\eu{{\rm e}}
\newcommand{\e}{\varepsilon}
\newcommand{\wt}{\widetilde}
\newcommand{\widebar}[1]{\mbox{\kern1pt\hbox{
\vbox{\hrule height 0.5pt \kern0.25ex
        \hbox{\kern-0.05em \ensuremath{#1 }\kern-0.05em}}}}\kern-0.1pt}
\newcommand\il{\left<}
\newcommand\ir{\right>}
\newcommand{\dx}{\, {\rm d} x }
\newcommand{\abs}[1]{\left\vert #1 \right\vert}
\newcommand{\norm}[1]{\left\Vert #1 \right\Vert}
\newcommand{\F}{\mathcal{F}}
\newcommand{\A}{\mathcal{A}}
\newcommand{\prob}{{\bf p}}
\newlength{\fixboxwidth}
\renewcommand{\rho}{{\varrho }}
\title{Complexity of Oscillatory Integrals on the Real Line}
\author{Erich Novak $^{a}$,\ \
Mario Ullrich $^{b}$,\ \ Henryk Wo\'zniakowski $^{c, d}$,\ \ Shun Zhang $^{e,}$\footnote{Corresponding author.
\newline\indent\, Email addresses: erich.novak@uni-jena.de(E. Novak), mario.ullrich@jku.at (M. Ullrich), henryk@cs.columbia.edu (H. Wo\'zniakowski), shzhang27@163.com (S. Zhang).}\ \
\\ {\footnotesize $^{\rm a}$ \rm Mathematisches Institut, Universit\"at Jena,
Ernst-Abbe-Platz 2, 07743 Jena, Germany}
\\ {\footnotesize\rm $^{\rm b}$ Institut f\"ur Analysis, Johannes Kepler Universit\"at
Linz, Austria}
\\ {\footnotesize $^{\rm c}$ \rm Department of Computer Science, Columbia University,
New York, NY 10027, USA}
\\  {\footnotesize\rm $^{\rm d}$ Institute of Applied Mathematics, University of Warsaw,
ul. Banacha 2, 02-097 Warszawa, Poland}
 \\  {\footnotesize\rm $^{\rm e}$ School of Computer Science and Technology, Anhui
University,
 Hefei 230601, China}}
\begin{document}

\maketitle

\begin{abstract}
We analyze univariate oscillatory integrals defined on the real line
for functions from the standard Sobolev space $H^s (\R)$
and from the space $C^s(\R)$
with an arbitrary integer $s\ge1$.
We find tight upper and lower bounds for the worst case
error of optimal algorithms that use~$n$ function values.
More specifically, we study integrals of the form
\begin{equation}\label{eq:problem}
I_k^\rho (f) = \int_{\R} f(x) \,\eu^{-i\,kx} \rho(x)  \, {\rm d} x\ \
\
\mbox{for}\ \ f\in H^s(\R)\ \ \mbox{or}\ \ f\in C^s(\R)
\end{equation}
with $k\in\R$ and a smooth density function $\rho$ such as
$ \rho(x) =  \frac{1}{\sqrt{2 \pi}} \exp( -x^2/2) $.
The optimal error bounds are
$\Theta((n+\max(1,|k|))^{-s})$ with the factors in the $\Theta$ notation
dependent only on $s$ and $\rho$.

\end{abstract}
\noindent{\bf Key words:}\, Oscillatory integrals;
Complexity;
Sobolev space.
\\
{\bf AMS Classification:}\,
 65Y20,~\,42B20,~\,65D30,~\,68Q25.

\section{Introduction}

In the last decades, many papers have been published on
the approximate computation of
highly oscillatory univariate integrals over finite intervals,
see the two surveys of Huybrechs and Olver \cite{HO09},
Milovanovi\'c and Stani\'c \cite{MiSt} and papers cited there.
Our paper \cite{NUW13}
belongs to this group of papers. We studied the
integration interval $[0,1]$ and found tight lower and upper
error bounds for algorithms that use $n$ function values
for periodic and nonperiodic
functions from the standard Sobolev spaces
$H^s([0,1])$ with an integer $s\ge1$.

For the case when the integration interval is
unbounded, the literature is not so rich.
We refer the readers to Blakemore, Evans and Hyslop \cite{BEH76},
Chen \cite{Ch14} and  Xu, Milovanovi\'c and Xiang \cite{XMX15}
for pointers to the literature.
However,  we could not find any
paper where tight lower error bounds were found.

The aim of this paper is to generalize results of \cite{NUW13}
for oscillatory integrals of the form~\eqref{eq:problem}
defined over the real line for functions from the space
$H^s(\R)$ with smooth density functions such as the normal one.
The main, and possibly surprising, result of this paper is that
for the real line and the space $H^s(\R)$,
sharp error bounds for algorithms that use $n$ function values are
roughly the same as for the interval $[0,1]$ and the periodic space
$H^s([0,1])$. More precisely, they are of order
$(n+\max(1,|k|))^{-s}$.
We add in passing that sharp error bounds are
higher for the density
$\rho = 1_{[0,1]}$ if we consider the whole class $H^s([0,1])$
without additional conditions on the boundary.

To approximate the univariate oscillatory integrals~\eqref{eq:problem},
we use a smooth partition of unity, and reduce
the integration problem over the whole real line
to the case of the integration problem over finite intervals.
The last problem could be solved by the change of variables and the use
of the results from~\cite{NUW13} for the integration domain $[0,1]$.
However this approach has some
drawbacks. First of all, we assume in \cite{NUW13} that $k$ is an
integer, which is not required in this paper. We also used
a slightly different norm of the space $H^s([a,b])$ than the more
standard norm which is now used.
Furthermore, the change of variables yields to larger factors
in the upper error bounds. Finally, and more importantly,
we present a new proof technique
which is based on Poisson's summation formula
as the basic tool to obtain upper error bounds.
That is why we decided to use this new approach and not to use
the results from \cite{NUW13}.

Sharp error bounds allow us to find sharp estimates on the information
complexity which is defined as the minimal number of function values
needed to find an algorithm with an error $\e\cdot{\rm CRI}$.
Here, $\e$ is a presumably small error threshold and ${\rm CRI}=1$
when the absolute error criterion is used, and ${\rm CRI}$ is the
initial error obtained by the zero algorithm when the normalized error
criterion is used.

Consider first the absolute error criterion.
The information complexity is then  roughly
$c_{s,\rho}\,\e^{-1/s}-\max(1,|k|)$ for some positive $c_{s,\rho}$.
Hence, large $|k|$ helps for not too small $\e$ and is irrelevant
if $\e$ goes to zero.

Consider now the normalized error
criterion. Then the information complexity is roughly
$\max(1,|k|)\,(c_{s,\rho}\,\e^{-1/s}-1)$ again for some positive
$c_{s,\rho}$. In this case, the information complexity is proportional
to $\max(1,|k|)$ so that large $|k|$ hurts for all $\e<1$.

The paper is organized as follows. In Section 2 some definitions and
preliminaries are given. In Section 3 we
study integration of functions with compact support,
whereas in Section 4 we consider integration of
functions defined over the real line. In both cases, we find matching
lower and upper error bounds for algorithms that use $n$ function values.

\section{Preliminaries}\label{sec:prelim}

In this paper we consider real or complex valued functions defined on
the whole real~line $\Omega = \R$ or on an interval $\Omega = [a,b]$
with $-\infty<a<b<\infty$.
Let $\il\cdot,\cdot\ir_{0,\Omega}$ be the usual inner product in $L_2(\Omega)$,
i.e.~$\il f,g\ir_{0,\Omega}:=\int_\Omega f(x) \widebar{g(x)}\dx$.
We consider the standard Sobolev space
$$
H^s(\Omega)=\{ f\in L_2(\Omega) \mid
f^{(s-1)} \hbox{ is abs. cont., } f^{(\ell)} \in L_2(\Omega)
\ \ \mbox{for}\ \ \ell=0,1,\dots,s \}\qquad
\mbox{for}\ \ s\in\N,
$$
which is equipped with the inner product
$$
\il f,g \ir_{s,\Omega} \;=\; \sum_{\ell=0}^{s} \il f^{(\ell)},
g^{(\ell)}\ir_{0,\Omega}\ \ \ \ \mbox{for}\ \ f,g\in H^s(\Omega).
$$
The norm in $H^s(\Omega)$ is given by
$\|f\|_{H^s(\Omega)}=\il f,f \ir_{s,\Omega}^{1/2}$.

We also consider the space $C^s(\Omega)$ of $s$ times
continuously differentiable functions with the norm
\[
\|f\|_{C^s(\Omega)} \,:=\, \max_{\ell=0,1,\dots,s}\,
\sup_{x\in\Omega}\, \bigl|f^{(\ell)}(x)\bigr|.
\]

Moreover, we consider functions with compact support
and define the respective classes
$H^s_0(\Omega)$ and $C^s_0 (\Omega)$.
More exactly, for functions $f\in H^s_0([a,b])$
we assume that $f\in H^s([a,b])$ and
$$
f^{(\ell)}(a)=f^{(\ell)}(b)=0\ \ \ \mbox{for}\ \ \ell=0,1,\dots,s-1,
$$
and for functions $f \in C^s_0 ([a,b])$ we assume that
that $f\in C^s([a,b])$ and
$$
f^{(\ell)}(a)=f^{(\ell)}(b)=0\ \ \ \mbox{for}\ \ \ell=0,1,\dots,s.
$$

Given a nonzero and
non-negative integrable function $\rho\colon\Omega\to[0,\infty)$,
we consider the approximation of oscillatory integrals of the
form~\eqref{eq:problem}, i.e.,
$$
I_k^\rho (f) = \int_{\Omega} f(x) \,\eu^{-i\,kx} \rho(x)  \, {\rm d} x,
\ \ \ i=\sqrt{-1},
$$
where $k \in \R$ and $f\in F$,
where $F\in\{H^s(\Omega),H^s_0(\Omega),C^s(\Omega),C^s_0(\Omega)\}$.
Specific smoothness assumptions on $\rho$ are given in the
corresponding theorems. For $\Omega=\R$, these assumptions are
satisfied for the normal density
\[
\rho(x) =  \frac{1}{\sqrt{2 \pi}} \exp( -x^2/2)\ \ \ \mbox{for}
\ \ x\in \R,
\]
whereas for $\Omega=[a,b]$, we study $\rho = 1_{[a,b]}$
which was already considered in \cite{NUW13} for $[a,b]=[0,1]$ and
an integer $k$.

For the approximation of $I_k^\rho$ we consider
algorithms that use $n$ function values.
It is well known that linear algorithms $A_n$ are optimal in our setting,
see e.g. \cite{TWW88} or \cite{NW08}, hence there is no need to study
more general algorithms such as nonlinear or adaptive algorithms.
The linear algorithms, or quadrature formulas, are
of the form
\[
A_n(f) = \sum_{j=1}^n a_j f(x_j) ,
\]
where the coefficients $a_j$ and the nodes $x_j$
of course may depend on $\Omega$, $s$, $k$, $\rho$ and $n$.

The aim of this paper is to prove upper and lower bounds on the
$n$th minimal (worst case) errors
\[
e(n, I_k^\rho , F) :=
\inf_{A_n}\, \sup_{f\in F:\, \|f\|_F\le1}\, |I_k^\rho (f) - A_n(f)|.
\]
This number is
the worst case error on the unit ball of $F$
of an optimal algorithm $A_n$ that uses at most $n$ function values
for the approximation of the functional $I_k^\rho$.
The initial error is given for $n=0$ when we do not sample the
functions. In this case the best we can do is to take
the zero algorithm $A_0(f)=0$ and
\[
e(0, I_k^\rho , F) :=
\sup_{f\in F:\, \|f\|_F\le1}\, |I_k^\rho (f)| = \| I_k^\rho \|_{F}
\]
We are ready to define
the \emph{information complexity}, which is the minimal number $n$ of
function values for which the $n$th minimal error of at most
$\eps\,{\rm CRI}$. Here, ${\rm CRI}=1$ if we consider the absolute
error criterion, and ${\rm CRI}=e(0,I_k^\rho,F)$ if we consider the
normalized error criterion. Hence,
for the \emph{absolute error criterion}
the information complexity is defined as
\begin{equation}\label{eq:absolute}
n^{\rm abs}(\eps, I_k^\rho , F) :=
\min\bigl\{n\colon e(n, I_k^\rho , F) \le \eps \bigr\},
\end{equation}
while for the \emph{normalized error criterion}
the information complexity is defined as
\begin{equation}\label{eq:normalized}
n^{\rm nor}(\eps, I_k^\rho, F) :=
\min\bigl\{n\colon e(n, I_k^\rho, F) \le \eps\; e(0, I_k^\rho, F) \bigr\}.
\end{equation}

As already mentioned, our basic tool to derive upper error bounds
will be the Poisson summation formula. We now remind the reader of this
formula. For integrable functions~$f$ on the whole real line,
the Fourier transform of $f$ is defined by
\[
\left[ \F f \right] (z) \;=\; \int_\R f(y)\, \eu^{-2\pi i zy} {\rm d} y
\qquad \mbox{for}\ \ z\in\R.
\]
The study of quadrature rules with equidistant nodes
can be done by Poisson's summation formula, see e.g.~\cite[Thm.~VII.2.4]{SW71}.
We state here only the univariate
version.
\begin{lem}\label{lem:poisson}
Let $f\in L_1(\R)$ be continuous. Then its periodization
$$
g(x):=\sum_{m\in\Z}f(x+m)\ \ \ \mbox{for}\ \ x\in\R
$$
converges in
the norm of $L_1([0,1])$. The resulting (1-periodic) function has the Fourier
expansion
\[
g(x)=\sum_{z\in\Z} \left[  \F f \right] (z)\,\eu^{2\pi i zx}\ \ \
\mbox{for}\ \ x\in\R.
\]
\qed
\end{lem}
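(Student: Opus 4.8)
The plan is to establish the two assertions in turn: first the $L_1([0,1])$-convergence of the periodization, and then the identification of its Fourier coefficients with the samples $[\F f](z)$ of the Fourier transform.

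For the convergence I would start from the non-negative series $\sum_{m\in\Z}\abs{f(x+m)}$ and integrate over $[0,1]$. Since the terms are non-negative, Tonelli's theorem permits exchanging summation and integration, giving
$$
\int_0^1 \sum_{m\in\Z}\abs{f(x+m)}\dx \;=\; \sum_{m\in\Z}\int_m^{m+1}\abs{f(y)}\dint y \;=\; \norm{f}_{L_1(\R)}<\infty.
$$
Hence $\sum_{m}\abs{f(x+m)}$ is finite for almost every $x$, so $g(x)=\sum_{m}f(x+m)$ converges absolutely a.e.; its partial sums are dominated by the $L_1([0,1])$-function $\sum_{m}\abs{f(x+m)}$, and dominated convergence then yields convergence of $g$ in $L_1([0,1])$ together with the bound $\norm{g}_{L_1([0,1])}\le\norm{f}_{L_1(\R)}$. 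Periodicity $g(x+1)=g(x)$ is immediate, since replacing $x$ by $x+1$ merely reindexes the sum via $m\mapsto m+1$.

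For the Fourier coefficients I would compute, for each $z\in\Z$,
$$
\widehat{g}(z)\;=\;\int_0^1 g(x)\,\eu^{-2\pi i zx}\dx\;=\;\sum_{m\in\Z}\int_0^1 f(x+m)\,\eu^{-2\pi i zx}\dx,
$$
where the interchange of sum and integral is legitimate because the series defining $g$ converges in $L_1([0,1])$ and $\abs{\eu^{-2\pi i zx}}=1$. After the substitution $y=x+m$ and the observation $\eu^{2\pi i zm}=1$ for integers $z,m$, each summand becomes $\int_m^{m+1}f(y)\,\eu^{-2\pi i zy}\dint y$, and summing over $m$ reassembles $\int_\R f(y)\,\eu^{-2\pi i zy}\dint y=[\F f](z)$. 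Thus $\widehat{g}(z)=[\F f](z)$ for every $z$, so the displayed series is exactly the Fourier series of $g$.

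The main obstacle is interpretive rather than computational: for a merely continuous $f\in L_1(\R)$ the series on the right need not converge pointwise, so the equality $g(x)=\sum_{z}[\F f](z)\,\eu^{2\pi i zx}$ must be read as the assertion that the right-hand side is the Fourier series of the $L_1([0,1])$-function $g$, with convergence in the $L_1$ sense. Genuine pointwise (indeed uniform) equality is recovered precisely in the regime relevant to our applications, where one additionally knows $\sum_{z}\abs{[\F f](z)}<\infty$: the series then converges uniformly to a continuous $1$-periodic function whose Fourier coefficients coincide with those of the continuous $g$, forcing the two to agree everywhere. Apart from this reading of the equality, the only technical points are the two interchanges of summation and integration, both justified by the uniform $L_1$ bound $\norm{g}_{L_1([0,1])}\le\norm{f}_{L_1(\R)}$ obtained in the first step.
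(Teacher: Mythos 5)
Your proposal is correct and is the standard argument (Tonelli for the $L_1$ bound $\|g\|_{L_1([0,1])}\le\|f\|_{L_1(\R)}$, dominated convergence for the $L_1$-convergence of the periodization, and a change of variables to identify $\widehat{g}(z)=[\F f](z)$), which is exactly the proof behind the reference \cite[Thm.~VII.2.4]{SW71} that the paper cites in lieu of giving its own proof. Your remark that the displayed equality must be read as the Fourier series of $g$ in the $L_1$ sense, with pointwise equality recovered under the additional summability $\sum_{z}\bigl|[\F f](z)\bigr|<\infty$ that holds in the paper's applications, is an accurate and welcome clarification of a point the lemma's statement leaves implicit.
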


A consequence of Lemma~\ref{lem:poisson}
applied to the function $cf(\cdot)\eu^{-i k\cdot}$
for an integrable and continuous $f$,
real $c\not=0$ and $k\in \R$, and then taking $g(0)$,
yields
\begin{equation}\label{eq:poisson}
c \sum_{x\in c\Z} f(x)\,\eu^{-i k x} \;=\; \sum_{z\in\Z}
\left[  \F f \right] \Bigl(\frac{z}c+\frac{k}{2\pi}\Bigr),
\end{equation}
see e.g.~\cite[Lemma~12]{MU14}.

Furthermore, we need the following lemma.

\begin{lem}\label{lem:norm}
Let $s\ge1$. For every $\Omega\subset\R$ we have
\begin{itemize}
\item (i)
\[
\|f g\|_{H^s(\Omega)} \;\le\; 2^{s} \, \|f\|_{H^s(\Omega)}\, \|g\|_{C^s(\Omega)}
\ \ \ \mbox{for} \ \ f\in H^s(\Omega) \ \mbox{and}\ g\in C^s(\Omega),
\]
\item (ii)
$$
\|f g\|_{C^s(\Omega)}\;\le\,2^{s}\,\|f\|_{C^s(\Omega)}\,
\|g\|_{C^s(\Omega)}
\ \ \ \mbox{for}\ \ f\in C^s(\Omega) \ \mbox{and}\ g\in C^s(\Omega).
$$
\end{itemize}
\qed
\end{lem}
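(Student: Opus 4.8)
The common engine for both parts is the Leibniz rule $(fg)^{(\ell)}=\sum_{j=0}^{\ell}\binom{\ell}{j}f^{(j)}g^{(\ell-j)}$ for $0\le\ell\le s$, so in each case the problem collapses to estimating sums of binomial coefficients. Since the bound is stated for an arbitrary $\Omega\subset\R$, it suffices to work with the derivatives directly, with no integration by parts or boundary terms entering.

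\emph{Part (ii).} I would argue pointwise. Fix $x\in\Omega$ and $\ell\le s$; since $j\le\ell\le s$ and $\ell-j\le s$, we have $|f^{(j)}(x)|\le\|f\|_{C^s(\Omega)}$ and $|g^{(\ell-j)}(x)|\le\|g\|_{C^s(\Omega)}$, whence the Leibniz rule gives $|(fg)^{(\ell)}(x)|\le\|f\|_{C^s(\Omega)}\|g\|_{C^s(\Omega)}\sum_{j=0}^{\ell}\binom{\ell}{j}=2^{\ell}\|f\|_{C^s(\Omega)}\|g\|_{C^s(\Omega)}$. Taking the supremum over $x$, the maximum over $\ell\le s$, and using $2^{\ell}\le2^{s}$ yields (ii) immediately.

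\emph{Part (i).} Here I combine the Leibniz rule with the triangle inequality in $L_2(\Omega)$ and H\"older's inequality $\|f^{(j)}g^{(\ell-j)}\|_{L_2(\Omega)}\le\|f^{(j)}\|_{L_2(\Omega)}\|g^{(\ell-j)}\|_{L_\infty(\Omega)}\le\|f^{(j)}\|_{L_2(\Omega)}\|g\|_{C^s(\Omega)}$. Writing $a_j:=\|f^{(j)}\|_{L_2(\Omega)}$ this gives $\|(fg)^{(\ell)}\|_{L_2(\Omega)}\le\|g\|_{C^s(\Omega)}\sum_{j=0}^{\ell}\binom{\ell}{j}a_j$, and summing the squares over $\ell$ leads to
\[
\|fg\|_{H^s(\Omega)}^2\;\le\;\|g\|_{C^s(\Omega)}^2\,\|La\|_2^2,
\]
where $a=(a_0,\dots,a_s)$ (so that $\|a\|_2=\|f\|_{H^s(\Omega)}$) and $L$ is the lower-triangular $(s{+}1)\times(s{+}1)$ matrix with entries $L_{\ell j}=\binom{\ell}{j}$. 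Thus (i) reduces entirely to the operator-norm bound $\|L\|_2\le2^{s}$.

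This last estimate is the only non-routine point, and I expect it to be the crux. I would obtain it from the Schur test $\|L\|_2\le(\|L\|_1\|L\|_\infty)^{1/2}$, where $\|L\|_\infty$ is the largest row sum and $\|L\|_1$ the largest column sum of $L$. The $\ell$-th row sum is $\sum_{j=0}^{\ell}\binom{\ell}{j}=2^{\ell}\le2^{s}$, while by the hockey-stick identity the $j$-th column sum is $\sum_{\ell=j}^{s}\binom{\ell}{j}=\binom{s+1}{j+1}$, whose maximum over $0\le j\le s$ is the central coefficient $\binom{s+1}{\lfloor(s+1)/2\rfloor}$. Invoking the elementary bound $\binom{N}{\lfloor N/2\rfloor}\le2^{N-1}$ for $N\ge1$ (the central coefficient never exceeds half of $\sum_m\binom{N}{m}=2^N$) with $N=s+1$ gives $\|L\|_1\le2^{s}$, and hence $\|L\|_2\le(2^{s}\cdot2^{s})^{1/2}=2^{s}$, completing (i). Should the central-coefficient step feel unsatisfying to cite, one can instead bound $\|(fg)^{(\ell)}\|_{L_2(\Omega)}^2$ through a weighted Cauchy--Schwarz step and verify $\sum_{\ell=j}^{s}2^{\ell}\binom{\ell}{j}\le4^{s}$ directly, which yields the same constant $2^s$.
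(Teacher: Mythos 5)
Your proof is correct and follows essentially the same route as the paper: part (ii) is the identical pointwise Leibniz argument, and in part (i) the paper also applies the Leibniz rule, the triangle inequality in $L_2(\Omega)$, and then controls the binomial double sum via a weighted Cauchy--Schwarz step using the row sums $2^{\ell}$ and the hockey-stick column sums $\binom{s+1}{m+1}\le 2^{s}$. Your packaging of that last step as the Schur bound $\|L\|_2\le(\|L\|_1\|L\|_\infty)^{1/2}$ for the binomial matrix is just a tidier restatement of the same computation.
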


\begin{proof}
(i) Using the product rule and the Cauchy-Schwarz inequality we obtain
\[\begin{split}
\|f g\|_{H^s(\Omega)}^2 \,&=\, \sum_{\ell=0}^s \norm{(f g)^{(\ell)}}_{L_2(\Omega)}^2
\,\le\, \sum_{\ell=0}^s \left(\sum_{m=0}^\ell \binom{\ell}{m} \norm{f^{(m)}
g^{(\ell-m)}}_{L_2(\Omega)}\right)^2 \\
&\le\, \sum_{\ell=0}^s 2^\ell\,\sum_{m=0}^\ell
\binom{\ell}{m} \norm{f^{(m)} g^{(\ell-m)}}_{L_2(\Omega)}^2
\,\le\, 2^s\,\norm{g}_{C^s(\Omega)}^2\,\sum_{\ell=0}^s \sum_{m=0}^\ell
\binom{\ell}{m} \norm{f^{(m)}}_{L_2(\Omega)}^2 \\
&\le\, 2^s\, \norm{g}_{C^s(\Omega)}^2\,\norm{f}_{H^s(\Omega)}^2\,
\left(\max_{m=0,\dots,s}\sum_{\ell=m}^s \binom{\ell}{m} \right).
\end{split}\]
This proves the bound since
$\sum_{\ell=m}^s \binom{\ell}{m}=\binom{s+1}{m+1}\le 2^{s}$.

(ii) For $\ell=0,1,\dots,s$ we have
$$
(fg)^{(\ell)}(x)=\sum_{m=0}^{\ell}\binom{\ell}{m}f^{(m)}(x)\,g^{(m-\ell)}(x).
$$
Therefore
$$
\|(fg)^{(\ell)}\|_{C(\Omega)}\le
\|f\|_{C^s(\Omega)}\|g\|_{C^s(\Omega)}\,
\sum_{m=0}^{\ell}\binom{\ell}{m}=
\|f\|_{C^s(\Omega)}\|g\|_{C^s(\Omega)}\,2^{\ell},
$$
and
$$
\|fg\|_{C^s(\Omega)}\le 2^s\,\|f\|_{C^s(\Omega)}\|g\|_{C^s(\Omega)}.
$$

\end{proof}


\section{Functions with compact support} \label{sec:compact}

In this section we study the approximation of $I_k^\rho$ for functions from
$H^s_0(\Omega)$ and $C^s_0(\Omega)$ with a bounded $\Omega = [a,b]$
and $\rho =1$.
In particular, we determine the dependence of the optimal error bounds on the
length $|\Omega| = b-a $ of the interval.

For $k\in\R$, we now study the functional
\[
I_{k}(f) \,:=\, \int_\R f(x) \,\eu^{-i\,kx} \, {\rm d} x
\,=\, \int_\Omega f(x) \,\eu^{-i\,kx} \, {\rm d} x
\ \ \ \mbox{for $f\in H^s_0 (\Omega)$ or $f\in C^s_0(\Omega)$}.
\]
First we find upper error bounds
for the initial error and
for a specific algorithm that uses $n$ function values and whose error will
be almost minimal. Then we provide
matching lower bounds.
Similar to \cite[Prop.~3]{NUW13} we prove the following assertion.

\begin{prop}\label{prop:initial1}
The initial error of $I_k$ satisfies
\[
e(0,I_{k}, H^s_0 (\Omega)) \;\le\; \frac{|\Omega|^{1/2}}{\nu_s(k)}
\;\le\; \frac{|\Omega|^{1/2}}{\bar k^s}
\]
and
\[
e(0,I_{k}, C^s_0 (\Omega)) \;\le\; \frac{|\Omega|}{\bar k^s},
\]
with $\nu_s(k) \,:=\,
\sqrt{1+\sum_{\ell=1}^s k^{2\ell}}$ and $\bar k=\max(1,|k|)$.
\qed
\end{prop}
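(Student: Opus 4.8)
The plan is to exploit the vanishing boundary data of functions in $H^s_0(\Omega)$ and $C^s_0(\Omega)$ via repeated integration by parts. The first step is to establish, for $f$ in either class and every $\ell=0,1,\dots,s$, the identity
\[
I_k(f^{(\ell)}) \,=\, (ik)^\ell\,I_k(f).
\]
This comes from integrating $\int_\Omega f^{(\ell)}(x)\,\eu^{-ikx}\dx$ by parts $\ell$ times: each step produces a boundary term that vanishes because $f^{(j)}(a)=f^{(j)}(b)=0$ for $j\le s-1$ (in $H^s_0$) respectively $j\le s$ (in $C^s_0$). These identities hold for \emph{every} real $k$, including $k=0$, where they degenerate to $\int_\Omega f^{(\ell)}=0$ for $\ell\ge1$.

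For the $H^s_0(\Omega)$ bound I would recognise a scalar multiple of $g(x):=\eu^{ikx}$ as a Riesz representer of $I_k$ on $H^s_0(\Omega)$. Since $g^{(\ell)}(x)=(ik)^\ell\eu^{ikx}$, so that $\overline{g^{(\ell)}(x)}=(-ik)^\ell\eu^{-ikx}$, the inner product unfolds using the identity above as
\[
\il f,g\ir_{s,\Omega}
\,=\,\sum_{\ell=0}^s(-ik)^\ell\,I_k(f^{(\ell)})
\,=\,\sum_{\ell=0}^s(-ik)^\ell(ik)^\ell\,I_k(f)
\,=\,\Bigl(\sum_{\ell=0}^s k^{2\ell}\Bigr)I_k(f)
\,=\,\nu_s(k)^2\,I_k(f),
\]
where the key cancellation is $(-ik)^\ell(ik)^\ell=k^{2\ell}$. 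A direct computation gives $\|g\|_{H^s(\Omega)}^2=|\Omega|\,\nu_s(k)^2$, and Cauchy--Schwarz then yields
\[
|I_k(f)|
\,=\,\frac{|\il f,g\ir_{s,\Omega}|}{\nu_s(k)^2}
\,\le\,\frac{\|f\|_{H^s(\Omega)}\,\|g\|_{H^s(\Omega)}}{\nu_s(k)^2}
\,=\,\frac{|\Omega|^{1/2}}{\nu_s(k)}\,\|f\|_{H^s(\Omega)},
\]
which is the first claimed bound. Here $g\notin H^s_0(\Omega)$, so this representer estimate need not be attained, but only an upper bound is required.

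The second inequality follows from $\nu_s(k)=\bigl(\sum_{\ell=0}^s k^{2\ell}\bigr)^{1/2}\ge\max(1,|k|^s)=\bar k^{\,s}$, obtained by retaining only the $\ell=0$ and $\ell=s$ terms. For the $C^s_0(\Omega)$ bound I would argue more directly, since $C^s$ carries no Hilbert structure: integrating by parts $s$ times gives, for $k\neq0$, the relation $I_k(f)=(ik)^{-s}\int_\Omega f^{(s)}(x)\,\eu^{-ikx}\dx$, whence $|I_k(f)|\le|k|^{-s}\int_\Omega|f^{(s)}|\le|\Omega|\,|k|^{-s}\,\|f\|_{C^s(\Omega)}$, matching $|\Omega|/\bar k^{\,s}$ whenever $|k|\ge1$. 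For $|k|\le1$ (so $\bar k=1$) the trivial estimate $|I_k(f)|\le\int_\Omega|f|\le|\Omega|\,\|f\|_{C^s(\Omega)}$ already gives $|\Omega|/\bar k^{\,s}$, completing the argument.

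The only point requiring care is the behaviour at small $|k|$: dividing by $k^s$ is legitimate only once $|k|\ge1$, which is precisely why $\bar k=\max(1,|k|)$ enters. In the $H^s_0$ case the representer argument sidesteps this entirely, being valid for all real $k$ (with $\nu_s(0)=1$ and $g\equiv1$), so no case distinction is visible there; in the $C^s_0$ case the split $|k|\le1$ versus $|k|\ge1$ must be made by hand. The one genuinely clever ingredient is the telescoping in the second step, where the factors $(-ik)^\ell$ from the conjugated derivatives of $g$ cancel exactly against the $(ik)^\ell$ from the integration-by-parts identity, collapsing the sum to $\nu_s(k)^2$.
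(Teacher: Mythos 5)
Your proof is correct and follows essentially the same route as the paper: both rest on integrating by parts against $\eu^{i k x}$ (legitimate because of the vanishing boundary data, and valid for all real $k$ including $k=0$), followed by Cauchy--Schwarz for $H^s_0(\Omega)$ and the elementary $L_\infty$--$L_1$ estimate with the case split encoded in $\bar k^s=\max_{\ell}|k|^\ell$ for $C^s_0(\Omega)$. The only cosmetic difference is that you apply Cauchy--Schwarz once in the full $H^s$ inner product via a representer computation, whereas the paper applies it term by term in $L_2$ and then sums the squares; the resulting bounds are identical.
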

\begin{proof}
Consider the function $e_k(x) = \eu^{i\, k x}$.
Then $I_k(f)=\il f, e_k\ir_{0,\Omega}$ for every $f\in L_1(\Omega)$.
Integration by parts yields
\[
|k|^\ell\, \abs{\il f, e_k\ir_{0,\Omega}}
\,=\, \abs{\il f, e_k^{(\ell)}\ir_{0,\Omega}}
\,=\, \abs{\il f^{(\ell)}, e_k\ir_{0,\Omega}}
\]
for each $\ell=0,1,\dots,s$.
Hence
\[
\nu_s(k)^2 \abs{I_k(f)}^2
\,=\, \sum_{\ell=0}^s \abs{\il f^{(\ell)}, e_k\ir_{0,\Omega}}^2
\,\le\, \sum_{\ell=0}^s \|f^{(\ell)}\|_{L_2(\Omega)}^2 \|e_k\|_{L_2(\Omega)}^2
\,=\, |\Omega|\,\|f\|_{H^s(\Omega)}^2
\]
and
\[
\bar k^{s} \abs{I_k(f)}
\,=\, \max_{\ell=0,\dots,s} \abs{\il f^{(\ell)}, e_k\ir_{0,\Omega}}
\,\le\, \max_{\ell=0,\dots,s} \|f^{(\ell)}\|_{L_\infty(\Omega)}
\|e_k\|_{L_1(\Omega)}
\,=\, |\Omega|\,\|f\|_{C^s(\Omega)}.
\]
Here we used that $\bar k^s=\max_{\ell=0,\dots,s}|k|^\ell$, where by
convention $0^0=1$.
Additionally, note that $\bar k^s\le(1+\sum_{\ell=1}^sk^{2\ell})^{1/2}$.
This completes the proof.
\end{proof}
\vskip 1pc
\begin{rem}
The upper bounds for the initial error can be proved analogously for
the more general Sobolev spaces $W^{s,p}_0(\Omega)$ which are normed
by
\[
\|f\|_{W^{s,p}(\Omega)}\;:=\;
\left(\sum_{\ell=0}^s \|f^{(\ell)}\|_{L_p(\Omega)}^p\right)^{1/p}.
\]
The upper bound would be $\bar k^{-s}\, |\Omega|^{1-1/p}$.
\qed
\end{rem}
\vskip 1pc
We now turn to the definition of an algorithm which uses $n$ function
values and whose error is, as we prove it later, almost minimal.
For $n\ge1$, define
$c_n:=|\Omega|/n$ and
the algorithm
\begin{equation}\label{eq:algorithm1}
A_n^{\Omega}(f) \;=\; c_n\sum_{x\in (c_n \Z)\cap\Omega} f(x) \,\eu^{-i\,kx}
\qquad\text{ for all }\qquad f\in H^s_0 (\Omega).
\end{equation}
Note that $x\in (c_n\Z)\cap\Omega$ means that $x=c_nj\in[a,b]$ for
some integer $j$. Or equivalently,
$$
\frac{j}n\in\left[\frac{a}{b-a},\frac{b}{b-a}\right].
$$
The number of such $j$ is clearly at most $n+1$. In fact, it can be
$n+1$ only if $a/(b-a)=m/n$ for some integer $m$. In this case,
$A_n^{\Omega}(f)$ uses one function value on the left and one on the right
boundary of $\Omega$. Since functions from $f\in H^s_0 (\Omega)$
are zero at these points, they can be omitted from the summation.
Hence, the number of function values used by the algorithm
$A_n^\Omega$ is at most $n$.
We now prove the following error bound for $A_n^\Omega$ for a relatively
large~$n$, whereas the case of small $n$ will be considered later.

\begin{thm} \label{thm:unweighted}
Let $s\in\N$, $k\in\R$ with $\bar k=\max(1,|k|)$,
and $\Omega = [a,b] \subset\R$. The algorithm~$A_n^{\Omega}(f)$
from \eqref{eq:algorithm1}
satisfies
\begin{enumerate}[(i)]
\item for each $f\in H^s_0 (\Omega)$ and $n\ge (2\pi)^{-1}(1+|k|)\, |\Omega| $:
\[
\abs{I_k(f)-A_n^{\Omega}(f)} \;\le\;
\frac{2}{(2\pi)^s}\;\frac{|\Omega|^{1/2}}{(n/
|\Omega|-|k|/2\pi)^s}\;\|f\|_{H^s(\Omega)},
\]
\item
for each $f\in H^s_0 (\Omega)$, $\a\in[1/3,1)$
and $n\ge [(1+\a)/(1-\a)]\,(2\pi)^{-1}\,\bar k\, |\Omega|$:
\[
\abs{I_k(f)-A_n^{\Omega}(f)} \;\le\;
\frac{2}{(2\pi\a)^s}\;\frac{|\Omega|^{1/2}}{\left(n/|\Omega|
+\bar k/2\pi\right)^s}\;\|f\|_{H^s(\Omega)}.
\]
\item
for each $f\in C^s_0 (\Omega)$, $\a\in[1/3,1)$
and $n\ge[(1+\a)/(1-\a)]\,(2\pi)^{-1}\,\bar k\, |\Omega|$:
\[
\abs{I_k(f)-A_n^{\Omega}(f)} \;\le\;
\frac{2}{(\sqrt{2}\,\pi\a)^s}\;\frac{|\Omega|}{\left(n/|\Omega|
+\bar k/2\pi\right)^s}\;\|f\|_{C^s(\Omega)} .
\]
\end{enumerate}
\qed
\end{thm}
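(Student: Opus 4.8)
The plan is to treat all three parts through a single mechanism—Poisson's summation formula \eqref{eq:poisson}—and to read off the error as the ``aliasing'' tail of that formula. First I would extend $f$ by zero from $\Omega=[a,b]$ to all of $\R$; the boundary conditions defining $H^s_0(\Omega)$ (resp. $C^s_0(\Omega)$) guarantee that the extension lies in $H^s(\R)$ (resp. $C^s(\R)$), is continuous and compactly supported, hence in $L_1(\R)$. With the normalization of $\F$ used in the excerpt one has $I_k(f)=\int_\R f(x)\eu^{-ikx}\dx=[\F f](k/2\pi)$. Since $f$ vanishes at any node lying on the boundary of $\Omega$, the algorithm may be written as a full lattice sum, $A_n^\Omega(f)=c_n\sum_{x\in c_n\Z}f(x)\eu^{-ikx}$ with $c_n=|\Omega|/n$. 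Applying \eqref{eq:poisson} with $c=c_n$ then gives $A_n^\Omega(f)=\sum_{z\in\Z}[\F f](\xi_z)$, where $\xi_z:=zn/|\Omega|+k/2\pi$, and the term $z=0$ is exactly $I_k(f)$. Consequently the error collapses to the tail
\[
\bigl|I_k(f)-A_n^\Omega(f)\bigr|\;=\;\Bigl|\sum_{z\neq 0}[\F f](\xi_z)\Bigr|\;\le\;\sum_{z\neq 0}\bigl|[\F f](\xi_z)\bigr|.
\]

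The second step supplies decay together with a global $\ell_2$ control of the Fourier samples. Integrating by parts $s$ times (legitimate by the boundary conditions, since all boundary terms vanish) yields $[\F f](\xi)=(2\pi i\xi)^{-s}[\F f^{(s)}](\xi)$, so $|[\F f](\xi_z)|=(2\pi|\xi_z|)^{-s}|[\F f^{(s)}](\xi_z)|$. I would then split the tail by Cauchy--Schwarz into $\bigl(\sum_{z\neq0}(2\pi|\xi_z|)^{-2s}\bigr)^{1/2}$ times $\bigl(\sum_{z\neq0}|[\F f^{(s)}](\xi_z)|^2\bigr)^{1/2}$. The crucial observation—and the heart of the proof—is that the exponentials $e_z(x)=\eu^{2\pi i\xi_z x}$ are \emph{orthogonal} on $L_2(\Omega)$: for $z\neq z'$ the frequency gap $\xi_z-\xi_{z'}=(z-z')\,n/|\Omega|$ is a nonzero integer multiple of $1/|\Omega|$ precisely because $n\in\N$, so $\int_\Omega e_z\,\overline{e_{z'}}=0$ and $\|e_z\|_{L_2(\Omega)}^2=|\Omega|$. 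Since $[\F f^{(s)}](\xi_z)=\langle f^{(s)},e_z\rangle_{L_2(\Omega)}$, Bessel's inequality gives $\sum_{z}|[\F f^{(s)}](\xi_z)|^2\le|\Omega|\,\|f^{(s)}\|_{L_2(\Omega)}^2\le|\Omega|\,\|f\|_{H^s(\Omega)}^2$. This $L_2$ route is exactly what makes the estimate survive the borderline case $s=1$, where a naive term-by-term $L_1$ bound on $[\F f]$ would produce the divergent series $\sum_z|\xi_z|^{-1}$.

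It then remains to estimate the purely numerical series $\sum_{z\neq0}|\xi_z|^{-2s}$. Writing $\xi_z=\delta(z+\beta)$ with $\delta=n/|\Omega|$ and $\beta=(k/2\pi)/\delta$, the hypothesis $n\ge(2\pi)^{-1}(1+|k|)|\Omega|$ forces $|\beta|<1$, so the smallest $|\xi_z|$ over $z\neq0$ equals $\delta-|k|/2\pi$ and $\sum_{z\neq0}|\xi_z|^{-2s}=(\delta-|k|/2\pi)^{-2s}\,S(\beta)$ with $S(\beta)=\sum_{z\neq0}\bigl((1-|\beta|)/|z+\beta|\bigr)^{2s}$. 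A short monotonicity argument—the maximum of the even function $S$ over $|\beta|<1$ is attained at $\beta=0$—gives $S(\beta)\le S(0)=2\zeta(2s)\le2\zeta(2)<4$, which delivers the constant $2$ and completes part (i). Part (ii) is then a reformulation: for $\alpha\in[1/3,1)$ the stronger hypothesis $n\ge\frac{1+\alpha}{1-\alpha}(2\pi)^{-1}\bar k\,|\Omega|$ implies both the hypothesis of (i) (since $\frac{1+\alpha}{1-\alpha}\ge2$ and $2\bar k\ge1+|k|$) and the elementary inequality $\delta-|k|/2\pi\ge\alpha\,(\delta+\bar k/2\pi)$, and substituting the latter into the bound of (i) produces exactly the factor $(2\pi\alpha)^{-s}$ and the ``$+\,\bar k/2\pi$'' denominator. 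Finally, part (iii) repeats the entire pipeline verbatim, the only change being that in the Bessel step one estimates $\|f^{(s)}\|_{L_2(\Omega)}^2\le|\Omega|\,\|f^{(s)}\|_{L_\infty(\Omega)}^2\le|\Omega|\,\|f\|_{C^s(\Omega)}^2$, which introduces the extra factor $|\Omega|^{1/2}$ and hence the $|\Omega|$ prefactor; the stated constant $2/(\sqrt2\,\pi\alpha)^s$ is then comfortably met. I expect the orthogonality/Bessel step to be the main obstacle, since it is what replaces the divergent $L_1$ tail by a convergent $\ell_2$ one and is the single place where the integrality of $n$ is genuinely used.
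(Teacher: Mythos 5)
Your argument is correct and reaches all three bounds with the stated (or slightly better) constants; it shares the paper's skeleton --- Poisson summation reduces the error to the aliasing tail at the frequencies $\xi_z=zn/|\Omega|+k/2\pi$, which is then split by Cauchy--Schwarz into a numerical series times an $\ell_2$ quantity --- but the two halves are implemented differently. For the $\ell_2$ half the paper integrates by parts against the weight $\nu_{k,n}^s(j)=\bigl(1+\sum_{\ell=1}^s|2\pi jn/|\Omega|+k|^{2\ell}\bigr)^{1/2}$, so that all derivatives $f^{(0)},\dots,f^{(s)}$ enter, and then bounds $\sum_{j\ne0}|\F[D^\ell f](\xi_j)|^2$ by periodizing $D^\ell f(c_n\,\cdot)\,\eu^{-ikc_n\cdot}$, counting that at most $n$ translates overlap, and applying Parseval on $[0,1]$; you keep only the top derivative and obtain the same bound $|\Omega|\,\|f^{(s)}\|_{L_2(\Omega)}^2$ from the orthogonality of $\{\eu^{2\pi i\xi_z x}\}_{z\in\Z}$ on $L_2(\Omega)$ (valid since $(\xi_z-\xi_{z'})\,|\Omega|=(z-z')\,n\in\Z\setminus\{0\}$) together with Bessel's inequality. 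The two devices are essentially equivalent --- Parseval for the periodization is completeness of the same exponential system --- but your route is shorter, isolates exactly where the integrality of $n$ enters, and costs nothing, because the paper itself discards the lower-order terms of $\nu_{k,n}^s$ when it estimates the numerical series; it even improves the constant in (iii), where the paper pays a factor $\sqrt{s+1}\le 2^{s/2}$ to pass from $\|f\|_{H^s(\Omega)}$ to $\|f\|_{C^s(\Omega)}$ while you use $\|f^{(s)}\|_{L_2(\Omega)}\le|\Omega|^{1/2}\|f\|_{C^s(\Omega)}$ directly. Your rescaled estimate $\sum_{z\ne0}|\xi_z|^{-2s}\le 2\zeta(2s)\,(n/|\Omega|-|k|/2\pi)^{-2s}$ (with $2\zeta(2s)\le\pi^2/3<4$) replaces the paper's integral-comparison bound and yields the same factor $2$ after taking the square root, and your deduction of (ii) from (i) via $n/|\Omega|-|k|/2\pi\ge\a\,(n/|\Omega|+\bar k/2\pi)$ is identical to the paper's.
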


\begin{proof}
Let $f\in H_0^s(\Omega)$. Then we can rewrite
\eqref{eq:algorithm1} as
$$
A_n^\Omega(f)\;=\,c_n\sum_{x\in c_n\,\Z}f(x)\eu^{-ikx}.
$$
Using \eqref{eq:poisson} we have
\[
A_n^\Omega(f) \;=\; \sum_{z\in (1/c_n)\Z} [\F f]\Bigl(z+\frac{k}{2\pi}\Bigr)
\;=\; \sum_{z\in \Z} [\F f]\Bigl(\frac{zn}{|\Omega|}+\frac{k}{2\pi}\Bigr),
\]
Noting that $I_k(f)=\F f(k/2\pi)$ we have
\[
\abs{I_k(f)-A_n^{\Omega}(f)}
\;=\; \abs{\sum_{z\in \Z\setminus0} [\F f]\Bigl(\frac{zn}
{|\Omega|}+\frac{k}{2\pi}\Bigr)} .
\]
Define
\begin{equation}\label{eq:proof-nukn}
\nu_{k,n}^s(j) \;:=\;
\left(1+\sum_{\ell=1}^s\abs{2\pi\frac{jn}{|\Omega|}+k}^{2\ell}\right)^{1/2}.
\end{equation}
We bound the error by
\[\begin{split}
\abs{I_k(f)-A_n^{\Omega}(f)}
\;&=\; \abs{\sum_{j\in \Z\setminus0} [\nu_{k,n}^{s}(j)]^{-1}\;
  \nu_{k,n}^s(j)\;[\F f]\Bigl(\frac{jn}{|\Omega|}+\frac{k}{2\pi}\Bigr)} \\
\;&\le\; \left(\sum_{j\in \Z\setminus0} [\nu_{k,n}^{s}(j)]^{-2}\right)^{1/2}
        \left(\sum_{j\in \Z\setminus0} [\nu_{k,n}^{s}(j)]^2\;
   \abs{[\F f]\Bigl(\frac{jn}{|\Omega|}+\frac{k}{2\pi}\Bigr)}^2\right)^{1/2}.
\end{split}
\]
We first bound the second factor.
Integrating by parts yields
$$
\left|[\F f]\Bigl(\frac{jn}{|\Omega|}+\frac{k}{2\pi}\Bigr)\right|^2=
\left(2\pi\,\frac{jn}{|\Omega|}+k\right)^{-2\ell}\,\left|\F [D^\ell f]
 \Bigl(\frac{jn}{|\Omega|}+\frac{k}{2\pi}\Bigr)\right|^2
$$
for all $\ell=0,1,\dots,s$. Summing up with respect to $\ell$, we
obtain
\begin{equation}\label{eq:derivative}
[\nu_{k,n}^{s}(j)]^2\,
\abs{[\F f]\Bigl(\frac{jn}{|\Omega|}+\frac{k}{2\pi}\Bigr)}^2
\;=\; \sum_{\ell=0}^s \abs{\F[D^\ell f]
\Bigl(\frac{jn}{|\Omega|}+\frac{k}{2\pi}\Bigr)}^2 .
\end{equation}
Since $c_n=|\Omega|/n$, from \eqref{eq:derivative} we obtain
\[\begin{split}
\sum_{j\in \Z\setminus0} [\nu_{k,n}^{s}(j)]^2\; &
           \abs{[\F f]\Bigl(\frac{jn}{|\Omega|}+\frac{k}{2\pi}\Bigr)}^2
\;=\; \sum_{\ell=0}^s\sum_{j\in \Z\setminus0}
             \abs{\F[D^\ell f]\Bigl(\frac{j}{c_n}+\frac{k}{2\pi}\Bigr)}^2 \\
\;&=\; \sum_{\ell=0}^s\sum_{j\in \Z\setminus0}
                    \abs{\int_\R D^\ell f(y)\,\eu^{-iky}\,
                      \eu^{-2\pi i \frac{j}{c_n} y}\, {\rm d}y}^2 \\
\;&=\; \sum_{\ell=0}^s\sum_{j\in \Z\setminus0}
                        \abs{c_n \int_\R D^\ell f(c_n x)\,\eu^{-ik c_n x}\,
                                        \eu^{-2\pi i j x}\, {\rm d}x}^2 \\
\;&=\; \sum_{\ell=0}^s\sum_{j\in \Z\setminus0}
 \abs{c_n \sum_{m\in\Z} \int_0^1 D^\ell f(c_n (x+m))\,\eu^{-ik c_n (x+m)}\,
                                        \eu^{-2\pi i j x}\, {\rm d}x}^2. \\
\end{split}\]
Define the function
\[
g_{\ell,n}(x)\;=\; c_n\sum_{m\in\Z} D^\ell f(c_n (x+m))\,\eu^{-ik c_n (x+m)}
\qquad \mbox{for}\ \ x\in[0,1],
\]
and note that for each fixed $x$ the number of non-zero terms in the
last series is
\[
\abs{\{m\in\Z\colon c_n(x+m)\in\Omega\}}
\,=\,\abs{\Z\cap \left(n\frac{\Omega}{|\Omega|}-x\right)} \,\le\, n.
\]
We now show that $g_{\ell,n}\in L_2([0,1])$. Indeed,
$$
|g_{\ell,n}(x)|^2\,\le\, c_n^2\,n\, \sum_{m\in\Z}|D^\ell f(c_n(x+m))|^2,
$$
and
$$
\int_0^1
|g_{\ell,n}(x)|^2\,{\rm d}x\,
\le \,c_n^2\,n\, \int_{\R}|D^\ell f(c_n x)|^2\,{\rm d}x=
c_n\,n\,\int_{\R}|D^\ell f(x)|^2\,{\rm d}x<\infty,
$$
since $f\in H^s_0(\Omega)$ implies that
$D^\ell f\in L_2(\R)$ for all $\ell=0,1,\dots,s$.
Hence, $g_{\ell,n}\in L_2([0,1])$, as claimed.

By Parseval's theorem and the Cauchy Schwarz inequality we have
\[\begin{split}
\sum_{z\in \Z\setminus0} \nu_{k,n}^{s}(j)^2\;
                \abs{[\F f]\Bigl(\frac{jn}{|\Omega|}+\frac{k}{2\pi}\Bigr)}^2
\;&=\; \sum_{\ell=0}^s\sum_{j\in \Z\setminus0}
        \abs{\int_0^1 g_{\ell,n}(x)\,\eu^{-2\pi i j x}\, {\rm d}x}^2 \\
\;&\le\; \sum_{\ell=0}^s \int_0^1 \abs{g_{\ell,n}(x)}^2 \dx \\
\;&\le\; \sum_{\ell=0}^s c_n\, n  \int_\R \abs{D^\ell f(x)}^2 \dx
= |\Omega|\; \|f\|_{H^s(\Omega)}^2  .
\end{split}\]

We now bound the first factor in the estimate of
$|I_kf-A_n^\Omega(f)|$. For this assume that
$n\ge (2\pi)^{-1}(1+|k|)\, |\Omega|$. Then $2\pi n/|\Omega|\ge 1+|k|$.
Since
$\nu_{k,n}^s(j)\ge |2\pi\,j\,n/|\Omega|+k|^s$, we have
\[\begin{split}
\sum_{j\in \Z\setminus0} \nu_{k,n}^{s}(j)^{-2}
\;&\le\; \sum_{j\in \Z\setminus0} \abs{2\pi\frac{jn}{|\Omega|}+k}^{-2s}
\;\le\; 2 \sum_{j=1}^\infty \left(2\pi\frac{jn}{|\Omega|}-\abs{k}\right)^{-2s}\\
\;&=\; 2 \left(2\pi\frac{n}{|\Omega|}-\abs{k}\right)^{-2s}
                + 2\sum_{j=2}^\infty \left(2\pi\frac{jn}{|\Omega|}-\abs{k}\right)^{-2s} \\
\;&\le\; 2 \left(2\pi\frac{n}{|\Omega|}-\abs{k}\right)^{-2s}
                + 2\int_1^\infty \left(2\pi\frac{xn}{|\Omega|}-\abs{k}\right)^{-2s}\dx \\
\;&\le\; 4 \left(2\pi\frac{n}{|\Omega|}-\abs{k}\right)^{-2s}. \\
\end{split}\]
This proves (i).

Let $n\ge[(1+\a)/(1-\a)]\,(2\pi)^{-1}\,\bar k\, |\Omega|$ for
$\alpha\in[1/3,1)$. Then we have
$\frac{n}
{|\Omega|}-\frac{\abs{k}}{2\pi}\ge\a(\frac{n}{|\Omega|}+\frac{\bar{k}}{2\pi})$
and $\bar{k}\,(1+\a)/(1-\a)\ge
1+|k|$. Since now $n\ge (1+|k|)|\Omega|/(2\pi)$, (i) easily yields (ii).
For (iii) we simply
use $\|f\|_{H^s(\Omega)}^2\le(s+1)|\Omega|\,\|f\|_{C^s(\Omega)}^2$
and $\sqrt{s+1}\le2^{s/2}$.
This completes the proof.
\end{proof}
\vskip 1pc
\begin{rem}
It should be noted that we could prove the same upper bounds also for the case
of ``periodic'' functions. This means for
all functions $f\in H^s([a,b])$,
or $C^s([a,b])$,  such that
$f(x)\,\eu^{-ikx}$ is periodic with period $|\Omega|$.
However, we omit it
since this leads to some technicalities and is not needed later. \qed
\end{rem}
\vskip 1pc
Before we state the final result on the $n$th minimal errors,
including matching lower bounds,
we present a modification of the algorithm $A_n^\Omega$ that satisfies good
error bounds also for small $n$.
For small $n$, this algorithm, which we denote by $\A_n^\Omega$,
simply uses no information of the function $f$ and outputs zero.
Although this seems artificial, it is known, at least in special
cases, that for small $n$ the zero algorithm outperforms
$A_n^\Omega$, see~\cite[Thm.~4(ii)]{NUW13}.
More precisely, we define
\begin{equation} \label{eq:A_per}
\A_n^\Omega(f)=\begin{cases}
\ \ \ 0& \ \ \mbox{if} \ \ n<\frac1\pi\,\bar k\, |\Omega|,\\
\ \ \ A_n^\Omega(f)&\ \ \mbox{if}  \ \ n\ge\frac1\pi\,\bar k\, |\Omega|.
\end{cases}
\end{equation}

Theorem~\ref{thm:unweighted} immediately implies the following
error bound on $\A_n^\Omega$.

\begin{cor}\label{cor:unweighted}
For all $n,s\in\N$ and $k\in\R$ with $\bar k=\max(1,|k|)$,
the algorithm $\A_n^\Omega$ satisfies
\[
\abs{I_k(f)-\A_n^{\Omega}(f)} \;\le\;
\frac{2}{2^s}\;\frac{|\Omega|^{1/2}}{\left(n/|\Omega|
+\bar k/2\pi\right)^s}\ \ \|f\|_{H^s(\Omega)}\,
\qquad \text{ for } f\in H^s_0 (\Omega),
\]
\[
\abs{I_k(f)-A_n^{\Omega}(f)} \;\le\;
\frac{2}{2^{s/2}}\;\frac{|\Omega|}{\left(n/|\Omega|
+\bar k/2\pi\right)^s}\;\|f\|_{C^s(\Omega)}
\qquad \text{ for } f\in C^s_0 (\Omega).
\]
\end{cor}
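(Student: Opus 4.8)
The plan is to treat the two branches in the definition~\eqref{eq:A_per} of $\A_n^\Omega$ separately, since the cutoff $n=\tfrac1\pi\,\bar k\,|\Omega|$ has been chosen precisely so that each branch is covered by a result already in hand. No new analysis is needed; the corollary is just a repackaging of Theorem~\ref{thm:unweighted} together with Proposition~\ref{prop:initial1}.

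First, in the branch $n\ge\tfrac1\pi\,\bar k\,|\Omega|$ we have $\A_n^\Omega=A_n^\Omega$, and I would apply Theorem~\ref{thm:unweighted}(ii) and (iii) with the single choice $\a=1/3$. With this value $(1+\a)/(1-\a)=2$, so the hypothesis $n\ge[(1+\a)/(1-\a)]\,(2\pi)^{-1}\,\bar k\,|\Omega|$ of the theorem reduces to exactly $n\ge\tfrac1\pi\,\bar k\,|\Omega|$, i.e.\ precisely the branch we are in. It then remains only to compare constants: in (ii) the leading factor is $2/(2\pi\a)^s=2/(2\pi/3)^s$, and since $2\pi/3\ge2$ this is at most $2/2^s$, matching the claimed $H^s_0$ bound; in (iii) the factor is $2/(\sqrt2\,\pi\a)^s=2/(\sqrt2\,\pi/3)^s$, and since $\sqrt2\,\pi/3\ge\sqrt2$ this is at most $2/2^{s/2}$, matching the claimed $C^s_0$ bound.

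Second, in the branch $n<\tfrac1\pi\,\bar k\,|\Omega|$ we have $\A_n^\Omega(f)=0$, so $\abs{I_k(f)-\A_n^\Omega(f)}=\abs{I_k(f)}$, and I would invoke the initial-error estimates of Proposition~\ref{prop:initial1}, namely $\abs{I_k(f)}\le\bar k^{-s}\,|\Omega|^{1/2}\,\|f\|_{H^s(\Omega)}$ in the Sobolev case and $\abs{I_k(f)}\le\bar k^{-s}\,|\Omega|\,\|f\|_{C^s(\Omega)}$ in the $C^s$ case. To finish I need the elementary fact that these initial bounds are dominated by the claimed ones. Since here $n/|\Omega|<\bar k/\pi$, one has $n/|\Omega|+\bar k/2\pi<\tfrac{3}{2\pi}\,\bar k$, whence in the Sobolev case $2^s\,(n/|\Omega|+\bar k/2\pi)^s<(3/\pi)^s\,\bar k^s\le 2\,\bar k^s$ and in the $C^s$ case $2^{s/2}\,(n/|\Omega|+\bar k/2\pi)^s<(3/(\sqrt2\,\pi))^s\,\bar k^s\le 2\,\bar k^s$; these are exactly the rearrangements of the two desired inequalities, and both use nothing more than $3/\pi<1$ and $3/(\sqrt2\,\pi)<1$.

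I do not expect a genuine obstacle, as the estimate follows mechanically once the cases are split. The only point that requires care is the bookkeeping of constants: verifying that the single choice $\a=1/3$ simultaneously (a) makes the threshold of Theorem~\ref{thm:unweighted} coincide with the cutoff in~\eqref{eq:A_per}, and (b) improves the leading factors from $2/(2\pi\a)^s$ and $2/(\sqrt2\,\pi\a)^s$ to the stated $2/2^s$ and $2/2^{s/2}$; and in the zero-algorithm regime, checking that the crude bound $n/|\Omega|<\bar k/\pi$ already suffices to absorb the initial error into the claimed bound.
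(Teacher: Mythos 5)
Your proposal is correct and follows essentially the same route as the paper's own proof: the large-$n$ branch via Theorem~\ref{thm:unweighted}(ii)--(iii) with $\a=1/3$ (noting $2\pi/3\ge 2$ and $\sqrt2\,\pi/3\ge\sqrt2$), and the small-$n$ branch via Proposition~\ref{prop:initial1} together with $n/|\Omega|+\bar k/2\pi<3\bar k/(2\pi)$. The only cosmetic difference is that the paper obtains the $C^s_0$ case in the zero-algorithm regime by converting the $H^s$ bound using $\|f\|_{H^s(\Omega)}^2\le 2^s|\Omega|\|f\|_{C^s(\Omega)}^2$, whereas you invoke the $C^s_0$ initial-error estimate directly; both work.
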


\begin{proof}
Assume first that $n\ge\frac1\pi\,\bar k\, |\Omega|$.
In this case the upper bounds
follow from Theorem~\ref{thm:unweighted} (ii) and (iii)
with $\a=1/3$.
It remains to consider the case $n<\frac1\pi\,\bar k\, |\Omega|$.
Then we have $\bar k>2(n/|\Omega|+\bar k/2\pi)$, and therefore
for $f\in H^s_0 (\Omega)$ with $\|f\|_{H^s(\Omega)}\le1$,
\[
\abs{I_k(f)-\A_n^{\Omega}(f)}
\;\le\; e(0,I_k, H^s_0 (\Omega))
\;\le\; \frac{|\Omega|^{1/2}}{\bar k^s}
\;\le\; \frac{1}{2^s}\;\frac{|\Omega|^{1/2}}{\left(n/|\Omega|+\bar k/2\pi\right)^s},
\]
as claimed.
Again,
we use $\|f\|_{H^s(\Omega)}^2\le 2^s|\Omega|\,\|f\|_{C^s(\Omega)}^2$ for the
second bound.
This completes the proof.
\end{proof}

This enables us to give sharp bounds on the $n$th minimal error.

\begin{thm}\label{thm:unweighted_minimal}  \label{T13}
Let $k\in\R$ with $\bar k=\max(1,|k|)$.
Consider the integration problem $I_k$ defined
for functions from the spaces $H^s_0 (\Omega)$
or $C^s_0 (\Omega)$ with $\Omega = [a,b]$ and
$s\in\N$. Then there exist numbers $c_s\in(0,1/2^{s-1}]$
and $\tilde{c}_s\in(0,1/2^{(s-2)/2}]$ such that for every
       $n \in \N_0$
there are numbers $d_s=d_s(n,k)$ and
$\tilde d_s=\tilde d_s(n,k)$ such that
$$
d_s \in[c_s,1/2^{s-1}]\ \ \  \mbox{and}\ \ \
 \tilde d_s\in[\tilde
c_s,1/2^{(s-2)/2}]
$$
and
\LeftEqNo
\[\tag{i}
e\bigl(n,I_{k}, H^s_0 (\Omega)\bigr)
\;= \;d_s\;
\frac{|\Omega|^{1/2}}{(n/|\Omega|+\bar k/2\pi)^s},
\]
\[\tag{ii}
e\bigl(n,I_{k}, C^s_0 (\Omega)\bigr)
\;=\;\tilde d_s\;
\frac{|\Omega|}{(n/|\Omega|+\bar k/2\pi)^s}, \\[1mm]
\]
Moreover, the lower bounds hold for \emph{all} algorithms that use
at most $n$ function or derivative (up to order $s-1$) values.
\end{thm}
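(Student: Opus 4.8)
The plan is to obtain two matching inequalities and then \emph{define} $d_s=d_s(n,k)$ and $\tilde d_s=\tilde d_s(n,k)$ as the ratios of the true $n$th minimal error to the displayed fractions; the whole content is to show that these ratios land in $[c_s,1/2^{s-1}]$ and $[\tilde c_s,1/2^{(s-2)/2}]$ with positive constants depending only on $s$. The upper bounds are immediate from Corollary~\ref{cor:unweighted}: the algorithm $\A_n^\Omega$ uses at most $n$ function values, so $e(n,I_k,H^s_0(\Omega))\le 2^{-(s-1)}\,|\Omega|^{1/2}\,(n/|\Omega|+\bar k/2\pi)^{-s}$ and likewise with constant $2^{-(s-2)/2}$ for $C^s_0(\Omega)$. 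This gives $d_s\le 1/2^{s-1}$ and $\tilde d_s\le 1/2^{(s-2)/2}$, and once the lower bounds are in place the relation ``lower~$\le$~upper'' forces $c_s\le 1/2^{s-1}$ automatically.

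For the lower bounds I would use the standard reduction for linear functionals with linear information (cf.\ the optimality of linear algorithms cited in Section~\ref{sec:prelim}): if the information consists of at most $n$ values $f^{(\ell_j)}(x_j)$ with $\ell_j\le s-1$, forming a map $N$, then the error of \emph{any} (adaptive, nonlinear) algorithm is at least $\sup\{|I_k(h)|:\|h\|_F\le1,\ h\in\ker N\}$. It therefore suffices, for every placement of at most $n$ nodes, to exhibit admissible fooling functions with large functional value. To this end I would partition $[a,b]$ into $2\max(n,1)$ equal subintervals; at most $n$ of them meet the node set, so at least $N:=\max(n,1)$ subintervals, each of length $\delta=|\Omega|/(2\max(n,1))$, are node--free. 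On each free subinterval I place a \emph{modulated bump} $h_i(x)=\eu^{ikx}\phi_i(x)$, where $\phi_i\ge0$ is one fixed profile $\psi$ (with $\int\psi>0$) rescaled to that subinterval. Since $\phi_i$ and its derivatives up to order $s-1$ vanish at the subinterval endpoints, each $h_i$ lies in $H^s_0(\Omega)$ (resp.\ $C^s_0(\Omega)$) and, being supported away from the nodes, satisfies $N(h_i)=0$. The modulation is precisely the device that injects the $\bar k$--dependence, because $I_k(h_i)=\int\phi_i=\delta\int\psi$ is independent of $k$.

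The quantitative heart is the norm estimate. Leibniz' rule gives $(\eu^{ikx}\phi)^{(\ell)}=\eu^{ikx}\sum_{m=0}^\ell\binom{\ell}{m}(ik)^{\ell-m}\phi^{(m)}$, and with $\|\phi^{(m)}\|_{L_2}=\delta^{1/2-m}\|\psi^{(m)}\|_{L_2}$ the binomial theorem collapses the sum to
\[
\|h_i\|_{H^s(\Omega)}\;\le\; C_s\,\delta^{1/2}\bigl(|k|+\delta^{-1}\bigr)^{s},\qquad
\|h_i\|_{C^s(\Omega)}\;\le\; C_s\,\bigl(|k|+\delta^{-1}\bigr)^{s}.
\]
Hence a single bump obeys $|I_k(h_i)|/\|h_i\|_{H^s}\ge c_s\,\delta^{s+1/2}(|k|\delta+1)^{-s}$, and $\delta^{s+1}(|k|\delta+1)^{-s}$ in the $C^s$ case. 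Now I combine the $N$ bumps. For $H^s_0$ the disjoint supports make $\{h_i\}$ orthogonal in $H^s(\Omega)$, so optimizing $h=\sum_i c_ih_i$ over complex $c_i$ yields $\sup|I_k(h)|/\|h\|_{H^s}=\bigl(\sum_i|I_k(h_i)|^2/\|h_i\|_{H^s}^2\bigr)^{1/2}$, an $\ell_2$ sum giving a gain of $\sqrt N$. For $C^s_0$ the norm of a disjointly supported sum is the \emph{maximum} of the summands' norms, so equalizing magnitudes and aligning phases produces the $\ell_1$ value, a gain of $N$. Substituting $N=\max(n,1)$ and $\delta=|\Omega|/(2N)$ and simplifying collapses the products to $c_s\,|\Omega|^{1/2}\,(|k|+2N/|\Omega|)^{-s}$ and $\tilde c_s\,|\Omega|\,(|k|+2N/|\Omega|)^{-s}$ respectively; the elementary comparison $|k|+2N/|\Omega|\le(2\pi+2)(n/|\Omega|+\bar k/2\pi)$ then rewrites these in the stated form.

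The main obstacle is exactly the uniform norm bound $\|\eu^{ikx}\phi\|\lesssim\delta^{1/2}(|k|+\delta^{-1})^{s}$: the naive submultiplicative estimate from Lemma~\ref{lem:norm} only gives $\bar k^{\,s}\delta^{1/2-s}$, which is far too lossy when $|k|\delta$ is small and would replace the \emph{additive} frequency $n/|\Omega|+\bar k/2\pi$ by the \emph{multiplicative} $n\bar k$; one must keep the exact Leibniz expansion so that the two scales $|k|$ and $\delta^{-1}$ enter additively. A secondary nuisance, worth checking carefully, is the uniformity of the constants across the small--$n$ regime, where $N=\max(n,1)$ and the comparison $|k|+2N/|\Omega|\lesssim n/|\Omega|+\bar k/2\pi$ (including the passage from $|k|$ to $\bar k$) must be verified by hand rather than quoted.
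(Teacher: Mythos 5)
Your proposal is correct in outline and supplies in full the lower-bound construction that the paper itself only cites: the published proof consists of the upper bound from Corollary~\ref{cor:unweighted} plus a reference to Theorem~9 of \cite{NUW13}, with the remarks that one now has about $\frac1\pi|k|\,|\Omega|$ roots of the oscillatory weight $\cos(kx)$ in $\Omega$ and that the fooling functions there have norm $\Theta(|\Omega|^{1/2})$, resp.\ $\Theta(1)$. That argument is a real-variable one: it refines the partition by the sign changes of $\cos(kx)$, producing roughly $n+\frac1\pi|k|\,|\Omega|$ cells and hence the additive frequency $n/|\Omega|+\bar k/2\pi$ directly from a count of intervals. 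You instead keep only the $n$ nodes in the partition and put the entire $k$-dependence into the \emph{modulated} bumps $\eu^{ikx}\phi_i$, for which $I_k$ reduces to $\int\phi_i$ and the Leibniz expansion of the norm yields the factor $(|k|+\delta^{-1})^s$ with the two scales entering additively. This is a genuinely different, and arguably cleaner, mechanism; it also makes the $\ell_2$ (orthogonality) versus $\ell_\infty$ (disjoint supports) combination of the bumps, and hence the different powers of $|\Omega|$ in (i) and (ii), completely transparent. The surrounding machinery (radius of information for a linear functional, harmlessness of adaption and of derivative data because the bumps vanish to all orders at the cell endpoints) is standard and correctly invoked.

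One caveat: your ``elementary comparison'' $|k|+2N/|\Omega|\le(2\pi+2)\,(n/|\Omega|+\bar k/2\pi)$ with $N=\max(n,1)$ holds with an absolute constant only when $|\Omega|$ is bounded away from $0$ (say $|\Omega|\ge1$); for $n=0$ and $|\Omega|\to0$ the term $2/|\Omega|$ cannot be absorbed into $\bar k/2\pi$. This is not a defect of your construction --- for $k=0$ and $n=0$ the initial error on $H^s_0([0,L])$ is in fact of order $L^{s+1/2}$ rather than $L^{1/2}$, so no proof can make $c_s$ independent of small $|\Omega|$ --- but it does mean that $c_s$ and $\tilde c_s$ must be allowed to depend on a lower bound for $|\Omega|$. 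For the paper's application in Section~\ref{sec:line}, where $|\Omega_m|=2$, this is immaterial, but you should state the restriction explicitly.
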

\begin{proof}
The upper bound follows from Corollary~\ref{cor:unweighted}.
The proof of the lower bound is the same as
the proof of Theorem~9 in \cite{NUW13}
with two minor modifications. First, there are now about $\frac1\pi |k||\Omega|$
roots of the oscillatory weight $\cos(kx)$ in $\Omega$ and, secondly,
the fooling function $f$ that is constructed there satisfies
$\|f\|_{H^s(\Omega)}=\Theta(|\Omega|^{1/2})$ or
$\|f\|_{C^s(\Omega)}=\Theta(1)$.
\end{proof}
\vskip 1pc
We stress that the last bounds are sharp with respect to
$n$, $\bar k$ and $|\Omega|$ as well as
with respect to the convergence rate. The only part which is not sharp
involves factors which depend on $s$. However, even the upper bounds on
$d_s$ and $\tilde d_s$ are exponentially small in $s$.

{}From Theorem \ref{T13}
we easily obtain sharp estimates on the information complexities
defined by \eqref{eq:absolute} and \eqref{eq:normalized}.
\begin{cor}\label{cor:compl1}
Let $k\in\R$ with $\bar k=\max(1,|k|)$.
Consider the integration problem $I_k$
defined for
the spaces $H^s_0 (\Omega)$ or $C^s_0(\Omega)$
with $\Omega=[a,b]$ and $s\in\N$. Then for any positive $\e$
\begin{itemize}
\item there exist
$\alpha_s=\alpha_s(\eps,k)\in[c_s,2^{-(s-1)}]$ and
$\beta_s=\beta_s(\eps,k)\in[c_s(2\pi)^{s},1/(c_s2^{s-1})]$
with $c_s$ given in Theorem \ref{T13} such that
\begin{eqnarray*}
n^{\rm abs}\bigl(\eps,I_{k}, H^s_0 (\Omega)\bigr)\,&=&\,
\left\lceil |\Omega|\,
\left( \left(\frac{\alpha_s|\Omega|^{1/2}}
{\eps}\right)^{1/s} - \frac{\bar k}{2\pi}
 \right)_+\right\rceil,\\
n^{\rm nor}\bigl(\eps,I_{k}, H^s_0 (\Omega)\bigr)\,&=&\,
\left\lceil|\Omega|\,\frac{\bar k}{2\pi}
\left(\left(\frac{\beta_s}{\eps}\right)^{1/s} - 1
 \right)_+\right\rceil,
\end{eqnarray*}
\item there exist
$\tilde\alpha_s=\tilde\alpha_s(\eps,k)\in[\tilde c_s,2^{-(s-2)/2}]$ and
$\tilde \beta_s=
\tilde\beta_s(\eps,k)\in[\tilde c_s(2\pi)^s,1/(\tilde c_s2^{(s-2)/2})]$
with $\tilde c_s$ given in Theorem~\ref{T13} such that
\begin{eqnarray*}
n^{\rm abs}\bigl(\eps,I_{k}, C^s_0 (\Omega)\bigr)\,&=&\,
\left\lceil |\Omega|\,
\left(\left(\frac{\tilde\alpha_s|\Omega|}
{\eps}\right)^{1/s} - \frac{\bar k}{2\pi}
 \right)_+\right\rceil,\\
n^{\rm nor}\bigl(\eps,I_{k}, C^s_0(\Omega)\bigr)\,&=&\,
\left\lceil |\Omega|\,\frac{\bar k}{2\pi}\,
\left(\left(\frac{\tilde\beta_s}{\eps}\right)^{1/s} - 1
\right)_+\right\rceil.
\end{eqnarray*}
\end{itemize} \qed
\end{cor}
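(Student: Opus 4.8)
The plan is to read the two complexities straight off the exact error formula in Theorem~\ref{T13} by inverting it in $n$; the only genuine complication is that the constant $d_s=d_s(n,k)$ (resp. $\tilde d_s$) itself depends on $n$, so a naive inversion is unavailable and must be replaced by a sandwiching argument.

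First I would record two structural facts. The $n$th minimal error is nonincreasing in $n$ (an algorithm using $n$ nodes is a special case of one using $n+1$), so each complexity is a genuine first-crossing threshold. Second, for fixed $\alpha>0$ the quantity $M(\alpha):=\lceil |\Omega|((\alpha|\Omega|^{1/2}/\e)^{1/s}-\bar k/2\pi)_+\rceil$ is a nondecreasing integer-valued function whose argument is continuous in $\alpha$, so $M$ attains \emph{every} integer value between $M(c_s)$ and $M(2^{-(s-1)})$; the same holds for the normalized template $\lceil |\Omega|(\bar k/2\pi)((\beta/\e)^{1/s}-1)_+\rceil$.

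For the absolute criterion over $H^s_0(\Omega)$, write $N=n^{\rm abs}(\e,\cdots)$. Using $e(N)\le\e$ together with $d_s(N)\ge c_s$, I would solve $c_s|\Omega|^{1/2}/(N/|\Omega|+\bar k/2\pi)^s\le\e$ for $N$ to obtain $N\ge M(c_s)$; symmetrically, evaluating the exact formula at $n_0:=M(2^{-(s-1)})$ and using $d_s(n_0)\le 2^{-(s-1)}$ gives $e(n_0)\le\e$, hence $N\le M(2^{-(s-1)})$ by minimality. Thus $M(c_s)\le N\le M(2^{-(s-1)})$, and the surjectivity remark produces $\alpha_s=\alpha_s(\e,k)\in[c_s,2^{-(s-1)}]$ with $N=M(\alpha_s)$, which is the claimed identity. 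The $C^s_0$ case is the same computation with $|\Omega|^{1/2}$ replaced by $|\Omega|$, $c_s$ by $\tilde c_s$, and $2^{-(s-1)}$ by $2^{-(s-2)/2}$.

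For the normalized criterion the new ingredient is the initial error: Theorem~\ref{T13} at $n=0$ gives $e(0)=d_s(0)|\Omega|^{1/2}(2\pi/\bar k)^s$, so $e(n)\le\e\,e(0)$ rearranges into a bound on $(\bar k/2\pi)^s/(n/|\Omega|+\bar k/2\pi)^s$ with the ratio $d_s(n)/d_s(0)$ playing the role of $\beta_s$. The crucial point — and the step I expect to be the main obstacle — is pinning down the range of this ratio: the upper bound $d_s(n)/d_s(0)\le 2^{-(s-1)}/c_s$ is immediate, but the lower bound $\beta_s\ge c_s(2\pi)^s$ does \emph{not} follow from Theorem~\ref{T13} alone. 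Here I would invoke Proposition~\ref{prop:initial1}, which forces $e(0)\le|\Omega|^{1/2}/\bar k^s$ and hence $d_s(0)\le(2\pi)^{-s}$; combined with $d_s(n)\ge c_s$ this yields $d_s(n)/d_s(0)\ge c_s(2\pi)^s$. With the range $[c_s(2\pi)^s,1/(c_s2^{s-1})]$ established, the same sandwiching-plus-surjectivity argument delivers the required $\beta_s$, and the $C^s_0$ statement follows identically after replacing the constants by their tilded analogues.
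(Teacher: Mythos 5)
Your proposal is correct and follows essentially the same route as the paper: invert the exact error formula of Theorem~\ref{T13} for the absolute criterion (your sandwiching-plus-intermediate-value argument just makes explicit what the paper declares ``obvious''), and for the normalized criterion combine the two-sided bound on $e(0,I_k,F)$ coming from Theorem~\ref{T13} at $n=0$ with the sharper upper bound of Proposition~\ref{prop:initial1} to pin $\beta_s$ into $[c_s(2\pi)^s,1/(c_s2^{s-1})]$. You correctly identified that Proposition~\ref{prop:initial1} is indispensable for the lower end of that range, which is exactly the ingredient the paper's proof uses.
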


\begin{proof}
The results for
the absolute error criterion, i.e. the bounds on $n^{\rm abs}$, are
obvious from Theorem~\ref{T13}.
In view of \eqref{eq:normalized} the information complexity for the normalized error
criterion is given by
$n^{\rm nor}\bigl(\eps,I_{k}, F\bigr)=n^{\rm abs}\bigl(\eps\cdot e(0,I_{k}, F),I_{k}, F\bigr)$
for $F\in\{H^s_0 (\Omega),C^s_0(\Omega)\}$.
From Theorem~\ref{T13} (for $n=0$) and Proposition~\ref{prop:initial1} we know that
$e(0,I_{k}, H^s_0 (\Omega))\in\left[c_s |\Omega|^{1/2}(\bar k/2\pi)^{-s}, |\Omega|^{1/2}\bar k^{-s} \right]$
and $e(0,I_{k}, C^s_0 (\Omega))\in[\tilde c_s |\Omega|(\bar k/2\pi)^{-s},
 |\Omega|\bar k^{-s}]$.
Putting this in the bounds on $n^{\rm abs}$ this shows
$(2\pi)^s \alpha_s \le \beta_s \le c_s^{-1} \alpha_s$
and $(2\pi)^s \tilde\alpha_s \le \tilde\beta_s \le c_s^{-1} \tilde\alpha_s$
and proves the claim.
\end{proof}

The formulas in Corollary~\ref{cor:compl1} can be simplified when $\e$
goes to zero. Then we have
\begin{eqnarray*}
n^{\rm abs}\bigl(\eps,I_{k}, H^s_0 (\Omega)\bigr)\,&=&\,\Theta\left(
\frac{|\Omega|^{1+1/(2s)}}{\e^{1/s}}\right),\\
n^{\rm nor}\bigl(\eps,I_{k}, H^s_0 (\Omega)\bigr)\,&=&\,\Theta\left(
\frac{|\Omega|\,\bar k}{\e^{1/s}}\right),\\
n^{\rm abs}\bigl(\eps,I_{k}, C^s_0 (\Omega)\bigr)\,&=&\,\Theta\left(
\frac{|\Omega|^{1+1/s}}{\e^{1/s}}\right),\\
n^{\rm nor}\bigl(\eps,I_{k}, C^s_0(\Omega)\bigr)\,&=&\,\Theta\left(
\frac{|\Omega|\,\bar k}{\e^{1/s}}\right),
\end{eqnarray*}
where the factors in the $\Theta$ notations depend only on $s$.

We stress that we now have sharp dependence on $\e,|\Omega|$ and $\bar
k$. In all cases the dependence on $\e$ is through $\e^{-1/s}$,
whereas the dependence on $|\Omega|$ and $\bar k$ varies and is
different for the spaces $H^s_0(\Omega)$ and $C^s_0(\Omega)$ as well
as it depends on  the error criterion.
For the absolute error criterion there is
no asymptotic dependence on $\bar k$, however, for large $\bar k$ we
have to wait longer to see this asymptotic dependence.
For the normalized error criterion,
the information
complexity of the integration problem $I_k$ is roughly the same for
$H_0^s(\Omega)$ and $C_0^s(\Omega)$ and the dependence on $|\Omega|$
and $\bar k$ is through $|\Omega|\,\bar k$. In this case, large
$\bar k$ hurts.

Observe that the dependence on the size of
$|\Omega|$ with $|\Omega|>1$
 is more
severe for the absolute than for the normalized error
criterion, however, for large $s$ this difference disappears.
For small $|\Omega|<1$, the opposite holds and
the absolute error criterion is easier than the normalized error
criterion.


\section{Functions on the real line}\label{sec:line}

We now  consider the approximation of integrals of the form
$$
I_k^\rho (f) = \int_{\R} f(x) \,\eu^{-i\,kx} \rho(x)  \, {\rm d} x
\ \ \ \mbox{for}\ \ f\in H^s(\R)\ \ \mbox{or}\ \ f\in C^s(\R)
$$
with a sufficiently smooth density function $\rho$. The primal example
of such $\rho$ is the normal density
\[
\rho(x) =  \frac{1}{\sqrt{2 \pi}} \exp( -x^2/2) \ \ \ \mbox{for}\ \ x\in\R.
\]
We establish conditions on $\rho$
such that the optimal error bounds are
of the order $(n+ \bar k )^{-s}$, just as in the case
for a bounded interval with functions of compact support,
see Theorem~\ref{T13}.

We need the notion of a smooth partition of unity.
We call a family
$\{g_m\}_{m\in\Z}$ of functions a \emph{smooth partition of unity}
if $g_m\in C^\infty_0(\R)$ for all $m\in\Z$ and
\[
\sum_{m\in\Z}g_m(x)=1 \qquad \text{ for all } x\in\R.
\]
In this section we use a partition of unity with a specific structure.
Namely, we choose a (fixed) nonnegative
function $g\in C^\infty(\R)$ such that
$$
{\rm supp}(g)=[-1,1]\ \ \ \mbox{and}\ \ \
g(x)+g(x-1)=1\ \ \mbox{for}\ \ x\in[0,1],
$$
and define the functions
$$
g_m(x)=g(x-m)\ \ \ \mbox{for}\ \  m\in\Z\ \ \mbox{and} \ \ x\in\R.
$$
Such functions $g$ obviously exist; consider for example the up-function
defined in~\cite{Rv90}.
This function is defined by
$$
\frac{1}{2\pi} \int_\R e^{itx} \prod_{k=1}^\infty \frac{\sin t2^{-k}}{t2^{-k}}
{\rm d} t,
$$
it is a solution of the equation
$$
y' (x) = 2y (2x+1) -2y (2x-1)
$$
with compact support.


Based on such a partition of unity $\{g_m\}_{m\in\Z}$,
we define the partition of
the density $\rho$ by $\{\rho_m\}_{m\in\Z}$ with $\rho_m=g_m\cdot\rho$.
Clearly, ${\rm supp}(\rho_m)=[m-1, m+1]$, $\sum_{m\in\Z}\rho_m=\rho$,
and therefore
\begin{equation}\label{split}
I_k^\rho(f)=\sum_{m\in\Z}I_k^{\rho_m}(f)=\sum_{m\in\Z}I_k(f \rho_m).
\end{equation}
Note that $f \rho_m$ is now a function with support $\Omega_m:=[m-1, m+1]$
for each $m\in\Z$. If $f\rho_m$ belongs to $H^s_0(\Omega_m)$ or to
$C^s_0(\Omega_m)$ then we can approximate
$I_k(f \rho_m)$ by an (almost) optimal algorithm from
the last section, see~\eqref{eq:A_per} and Corollary~\ref{cor:unweighted}.
If this holds for all $m\in\Z$, we can apply the algorithm
from~\eqref{eq:A_per} to
each piece $f \rho_m$ and obtain a good approximation of $I_k^\rho(f)$.
We will prove that this is indeed the case under some smoothness
assumptions on $\rho$.

The algorithm for the approximation of $I_k^\rho(f)$ we are going
to analyze is based on a suitable distribution of the $n$ function
evaluations over the real line.
For this, let
$$
\prob=\{p_m\}_{m\in\Z}
$$
be a family of real numbers, to be specified in a moment,  such that
\begin{equation}\label{propp}
p_m\in[0,1]\ \ \  \mbox{for all $m$ \ \ and \ \ $\sum_{m\in\Z}p_m=1$}.
\end{equation}
For the space $H^s(\R)$ we assume that $\rho\in C^s(\R)$ and
that the sequence $\{\|\rho\|_{C^s(\Omega_m)}\}_{m\in\Z}$ belongs to
  $\ell_{1/(s+1/2)}$, i.e.,
\begin{equation}\label{pp1}
\sum_{m\in\Z}\|\rho\|_{C^s(\Omega_m)}^{1/(s+1/2)}<\infty,
\end{equation}
whereas for the space $C^s(\R)$ we assume that
$\rho\in H^s(\R)$ and $\{\|\rho\|_{H^s(\Omega_m)}\}_{m\in\Z}\in
\ell_{1/(s+1)}$,
i.e.,
\begin{equation}\label{pp2}
\sum_{m\in\Z}\|\rho\|_{H^s(\Omega_m)}^{1/(s+1)}<\infty.
\end{equation}
It is easy to verify that these assumptions hold if $\rho$ is $s$
times continuously differentiable and its derivatives decay
exponentially fast to zero if its argument goes to infinity.

In particular, this is true for the normal density
$\rho(x)=1/\sqrt{2\pi\sigma^2}\,\exp(-x^2/(2\sigma^2))$
with the standard deviation $\sigma >0$.
This problem seems to be of some physical importance.
In this case,
we can estimate the norms  $\|\rho\|_{C^s(\Omega_m)}$ for $m \in \Z$
by Cramer's bound which states that
\[
\|\rho\|_{C^s(\Omega_m)}
\,\le\, (2\pi)^{-1/4}\,\sigma^{-1}\, \sqrt{s!}\, e^{-(\bar{m}-1)^2/(4\sigma^2)},
\]
with $\bar m=\max(1,|m|)$, see e.g.~\cite[p.~324]{Sa77}.
Clearly, the sequence $\bigl\{\|\rho\|_{C^s(\Omega_m)}\bigr\}_{m\in\Z}$
(and hence the sequence $\bigl\{\|\rho\|_{H^s(\Omega_m)}\bigr\}_{m\in\Z}$) is in
every $\ell_p$, $p>0$, due to its exponential decay.
Therefore~\eqref{pp1} and \eqref{pp2} hold.

Note that due to Lemma~\ref{lem:norm} (ii),
\eqref{pp1} implies that
$$
\rho_{C^s}:=\sum_{m\in\Z}\|\rho_m\|_{C^s(\Omega_m)}^{1/(s+1/2)}\le
2^{s/(s+1/2)}\,\|g\|_{C^s(\R)}^{1/(s+1/2)}\,\sum_{m\in \Z}
\|\rho\|_{C^s(\Omega_m)}^{1/(s+1/2)}<\infty,
$$
whereas due to Lemma~\ref{lem:norm} (i), \eqref{pp2} implies that
$$
\rho_{H^s}:=\sum_{m\in\Z}\|\rho_m\|_{H^s(\Omega_m)}^{1/(s+1)}\le
2^{s/(s+1)}\,\|g\|_{C^s(\R)}^{1/(s+1)}\,\sum_{m\in \Z}
\|\rho\|_{H^s(\Omega_m)}^{1/(s+1)}<\infty.
$$

Then
\begin{eqnarray*}
p_m&=&\frac{\|\rho_m\|_{C^s(\Omega_m)}^{1/(s+1/2)}}{\rho_{C^s}}
\ \ \ \ \
\mbox{for}\
\ H^s(\R),\\
p_m&=&\frac{\|\rho_m\|_{H^s(\Omega_m)}^{1/(s+1)}}{\rho_{H^s}}
\ \ \ \ \ \ \ \mbox{for}\
\ C^s(\R)
\end{eqnarray*}
are well defined and satisfy \eqref{propp}.

We are ready to define the algorithm for approximating $I_k^\rho$ by
\begin{equation}\label{eq:A}
\mathcal{A}_{n,\prob}(f) = \sum_{m\in\Z}\,
\mathcal{A}_{n_m}^{\Omega_m}(f\cdot\rho_m)\qquad
{\rm with}\;\; n_m\,:=\, \lfloor p_m\, n\rfloor,
\end{equation}
where $n \in\N_0$, $f\in H^s(\R)$ or $f\in C^s(\R)$,  and
$\mathcal{A}_{n_m}^{\Omega_m}$ is given by~\eqref{eq:A_per}.
Note that $f\in H^s(\R)$ implies that $f\rho_m\in H^s(\Omega_m)$
and $f\in C^s(\R)$ implies that $f\rho_m\in C^s(\Omega_m)$. Hence
$\mathcal{A}_{n_m}^{\Omega_m}(f\cdot\rho_m)$ is well defined.
Note also that only finitely many $n_m$ are nonzero. Therefore almost
all $n_m=0$, and since $\mathcal{A}_0^{\Omega_m}=0$ the series in \eqref{eq:A}
has only finitely many nonzero terms. Hence, $\mathcal{A}_{n,\prob}$
is well defined. We now estimate the error of $\mathcal{A}_{n,\prob}$.
\medskip
\begin{thm}\label{algrealline}
Assume that \eqref{pp1} holds if we consider the space $H^s(\R)$,
and \eqref{pp2} if we consider the space $C^s(\R)$.
Let $\mathcal{A}_{n,\prob}$ be given by \eqref{eq:A} for $n\in \N_0$,
and let $\bar k=\max(1,|k|)$.  Then
\begin{eqnarray*}
|I_k^\rho(f)-\mathcal{A}_{n,\prob}(f)|&\le&
 \frac{4(2\pi)^s\rho_{C^s}^{s+1/2}}{(n+\bar k)^s}\ \|f\|_{H^s(\R)}\ \
 \  \mbox{for all}\ \ f\in H^s(\R),\\
|I_k^\rho(f)-\mathcal{A}_{n,\prob}(f)|&\le&
 \frac{2^{3/2}(2\pi)^s\rho_{H^s}^{s+1}}{(n+\bar k)^s}\ \|f\|_{C^s(\R)}\ \
 \  \mbox{for all}\ \ f\in C^s(\R).
\end{eqnarray*}
\qed
\end{thm}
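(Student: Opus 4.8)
The plan is to start from the exact decomposition \eqref{split}, so that
$I_k^\rho(f)-\mathcal{A}_{n,\prob}(f)=\sum_{m\in\Z}\bigl(I_k(f\rho_m)-\mathcal{A}_{n_m}^{\Omega_m}(f\rho_m)\bigr)$,
and estimate by the triangle inequality. Each summand is an integration error of $f\rho_m$ on $\Omega_m=[m-1,m+1]$, with $|\Omega_m|=2$. Since $g$, and hence each $g_m$, vanishes to infinite order at the endpoints of its support, $f\rho_m=f g_m\rho$ and all its derivatives vanish at $m\pm1$; thus $f\rho_m\in H^s_0(\Omega_m)$ for $f\in H^s(\R)$, and likewise $f\rho_m\in H^s_0(\Omega_m)$ for $f\in C^s(\R)$ when $\rho\in H^s(\R)$ (here $f\rho_m\in H^s$ by Lemma~\ref{lem:norm}(i)). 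Hence I may apply the $H^s_0$ part of Corollary~\ref{cor:unweighted} to \emph{every} piece, controlling $\|f\rho_m\|_{H^s(\Omega_m)}$ by $2^s\|f\|_{H^s(\Omega_m)}\,w_m$ in the Sobolev case and by $2^s\|f\|_{C^s(\Omega_m)}\,v_m$ in the $C^s$ case (Lemma~\ref{lem:norm}(i)), where $w_m:=\|\rho_m\|_{C^s(\Omega_m)}$ and $v_m:=\|\rho_m\|_{H^s(\Omega_m)}$. Writing $D_m:=n_m/2+\bar k/2\pi$, the compensating factor $2^{-s}$ in Corollary~\ref{cor:unweighted} cancels the $2^s$ from Lemma~\ref{lem:norm}, leaving a clean per-piece bound of the form $2\sqrt2\,w_m D_m^{-s}\|f\|_{H^s(\Omega_m)}$ (resp. $2\sqrt2\,v_m D_m^{-s}\|f\|_{C^s(\Omega_m)}$).

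To assemble the pieces I would treat the two spaces differently. For $H^s(\R)$ I apply Cauchy--Schwarz across $m$ together with the overlap identity $\sum_m\|f\|_{H^s(\Omega_m)}^2=2\,\|f\|_{H^s(\R)}^2$ (a.e. point lies in exactly two intervals $\Omega_m$), reducing matters to bounding $\sum_m w_m^2 D_m^{-2s}$. For $C^s(\R)$ there is no $\ell_2$ structure, so I bound $\|f\|_{C^s(\Omega_m)}\le\|f\|_{C^s(\R)}$ and reduce to $\sum_m v_m D_m^{-s}$. The exponents in \eqref{pp1}--\eqref{pp2} are tuned precisely for these sums: from $p_m=w_m^{1/(s+1/2)}/\rho_{C^s}$ one gets $w_m^2=p_m^{2s+1}\rho_{C^s}^{2s+1}$, and from $p_m=v_m^{1/(s+1)}/\rho_{H^s}$ one gets $v_m=p_m^{s+1}\rho_{H^s}^{s+1}$, so each summand becomes comparable to $p_m$ times the target and $\sum_m p_m=1$ closes the estimate.

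The crux is the lower bound on $D_m$, and the main obstacle is the pieces with $n_m=\lfloor p_m n\rfloor=0$. I would split the index set by $p_m n\ge1$ and $p_m n<1$. When $p_m n\ge1$ one has $n_m\ge p_m n/2$, and since $\pi/2>1$ and $p_m\le1$ this yields the clean bound $D_m\ge p_m(n+\bar k)/2\pi$; feeding this in produces exactly the factor $(2\pi)^s$ with $\sum_{p_m n\ge1}p_m\le1$. When $p_m n<1$ the piece receives no nodes, $\mathcal{A}_{n_m}^{\Omega_m}\equiv0$, and Corollary~\ref{cor:unweighted} (denominator $\bar k/2\pi$) is too lossy by a factor $(2\pi)^s$; instead I would bound $|I_k(f\rho_m)|$ by the initial error of Proposition~\ref{prop:initial1} (denominator $\bar k^{s}$). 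Using $p_m<1/n$ to extract $p_m^{2s}\le n^{-2s}$ (resp. $p_m^{s}\le n^{-s}$), the emerging $(n\bar k)^{-2s}$ becomes $(n+\bar k)^{-2s}$ via $n\bar k\ge(n+\bar k)/2$; combined with the surviving $2^s$ from Lemma~\ref{lem:norm}, the small pieces carry an overall factor $16^s$ (resp. $4^s$) against the target. The key point that keeps the final constant at $(2\pi)^s$ is that this factor is absorbed, since $4\pi^2>16$ gives $16^s\le(2\pi)^{2s}$ in the Sobolev case and $4<2\pi$ gives $4^s\le(2\pi)^s$ in the $C^s$ case. Summing the two parts and invoking $\sum_m p_m=1$ gives the stated bounds, with the prefactors $4$ and $2^{3/2}$ emerging from the accumulated $\sqrt2$ and $\sqrt8$ constants. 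Finally, the degenerate case $n=0$ (all pieces unsampled) I would dispatch separately by bounding $|I_k^\rho(f)|\le\sum_m|I_k(f\rho_m)|$ through Proposition~\ref{prop:initial1} and the embeddings $\ell_{1/(s+1/2)}\hookrightarrow\ell_2$ (resp. $\ell_{1/(s+1)}\hookrightarrow\ell_1$), which comfortably fits inside the claimed constant.
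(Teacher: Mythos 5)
Your proposal is correct and follows essentially the same route as the paper: partition-of-unity decomposition, piecewise application of Corollary~\ref{cor:unweighted} (and Proposition~\ref{prop:initial1} for the unsampled pieces) to obtain a uniform per-piece bound of order $(p_m(n+\bar k))^{-s}$, and assembly via Cauchy--Schwarz resp.\ an $\ell_1$ sum with the exponents in the definition of $p_m$ tuned so that everything reduces to $\sum_m p_m=1$. The only differences are bookkeeping (the paper splits cases by $n_m\gtrless 2\bar k/\pi$ and uses $n_m\ge p_m n/\pi$); just make sure, in the $H^s$ case, to merge the two per-piece constants via your inequality $16^s\le(2\pi)^{2s}$ \emph{before} performing a single Cauchy--Schwarz over all $m$, since applying it separately to the two index sets would slightly exceed the stated constant.
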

\begin{proof}
{}From \eqref{split}
the error of the algorithm $\A_{n,\prob}$ can be bounded by
\[\begin{split}
|I_k^\rho(f)- \A_{n,\prob}(f)|\;&\le\;\sum_{m\in\Z}\,|I_k(f\cdot\rho_m)
- \A_{n_m}^{\Omega_m}(f\cdot\rho_m)|.
\end{split}\]
We need to express the right hand side of this inequality in terms of $p_m n$
instead of $n_m$.
For $n_m = \lfloor p_m\,n \rfloor < 2\bar k /\pi$ we have
$\A_{n_m}^{\Omega_m}=0$, see~\eqref{eq:A_per}. Note that now
$p_m n + \bar k < 2 \bar k/\pi +1 +\bar k \le \bar k(2/\pi+2)\le 3 \bar k$.
Since $|\Omega_m|=2$, we obtain from Proposition~\ref{prop:initial1}
that
\begin{equation}\label{123}
\frac{|I_k(f\cdot\rho_m)- \A_{n_m}^{\Omega_m}
(f\cdot\rho_m)|}{\norm{f\cdot\rho_m}_{H^s(\Omega_m)}}
\,\le\, \frac{\sqrt{2}}{\bar k^s}
\,\le\, \frac{2^{1/2}3^s}{(p_m n+\bar k)^s}.
\end{equation}
For $n_m = \lfloor p_m\,n \rfloor \ge 2\bar k /\pi$, we cannot have
$n_m=0$. Therefore $n_m\ge1$. We also have
$n_m\ge p_m n/\pi$. Indeed, it is true if $p_mn\le \pi$ and for
$p_mn>\pi$ we have $n_m\ge p_mn-1\ge p_mn/\pi$.
Hence
$n_m +\bar k/\pi \,\ge\, \frac1{\pi}(p_m n+\bar k)$.
This and Corollary~\ref{cor:unweighted} yield
\begin{equation}\label{124}
\frac{|I_k(f\cdot\rho_m)- \A_{n_m}^{\Omega_m}
(f\cdot\rho_m)|}{\norm{f\cdot\rho_m}_{H^s(\Omega_m)}}
\,\le\, \frac{2^{3/2} \pi^s}{(p_m n+\bar k)^s}.
\end{equation}
Due to \eqref{123}, the last inequality \eqref{124} holds
for all $n$. Since $(p_mn+\bar k)^s\ge p_m^s(n+\bar k)^s$, we
have
\begin{equation}   \label{both}
\begin{split}
|I_k^\rho(f)- \A_{n,\prob}(f)|
\;&\le\;\sum_{m\in\Z}\,\frac{2^{3/2} \pi^s}{\left(p_m n+\bar k\right)^s}
\;\norm{f\cdot\rho_m}_{H^s(\Omega_m)} \\
\;&\le\; \frac{2^{3/2} \pi^s}{\left(n+\bar k\right)^s}\,
\sum_{m\in\Z}\,\frac{\norm{f\cdot\rho_m}_{H^s(\Omega_m)}}{p_m^s}.
\end{split}
\end{equation}

It remains to bound the last sum.
We first prove the result for $f\in H^s(\R)$.
In this case,
$p_m=\rho_{C^s}^{-1}\,\|\rho_m\|_{C^s(\Omega_m)}^{1/(s+1/2)}$.
Clearly,
\[
\sum_{m\in\Z} \norm{f}_{H^s(\Omega_m)}^2 \,=\, 2\, \norm{f}_{H^s(\R)}^2.
\]
By Lemma~\ref{lem:norm}(i), we obtain
\[\begin{split}
\sum_{m\in\Z}\,\frac{\norm{f\cdot\rho_m}_{H^s(\Omega_m)}}{p_m^s}
\;&\le\; 2^s \sum_{m\in\Z}\,\frac{1}{p_m^s}\;
\norm{\rho_m}_{C^s(\Omega_m)}\;\norm{f}_{H^s(\Omega_m)}\\
\;&\le\; 2^s\left(\sum_{m\in\Z}\,
\frac{\norm{\rho_m}_{C^s(\Omega_m)}^2}{p_m^{2s}} \right)^{1/2}
        \left(\sum_{m\in\Z}\,\norm{f}_{H^s(\Omega_m)}^2\right)^{1/2}\\
\;&=\; 2^{s+1/2} \norm{f}_{H^s(\R)}\,
\left(\sum_{m\in\Z}\,\frac{\norm{\rho_m}_{C^s(\Omega_m)}^2}{p_m^{2s}} \right)^{1/2}\\
\;&=\; 2^{s+1/2}\; \rho_{C^s}^s \norm{f}_{H^s}\,
\left(\sum_{m\in\Z}\,\norm{\rho_m}_{C^s(\Omega_m)}^{1/(s+1/2)} \right)^{1/2}.
\end{split}\]
With \eqref{both}
this leads to
\[
|I_k^\rho(f)- \A_{n,\prob}(f)|
\;\le\; \frac{\wt{C}_{s,\rho}\,\norm{f}_{H^s(\R)}}{\left(n+\bar k\right)^s},
\]
where $\wt C_{s,\rho} = 4 (2\pi)^{s}\, \rho_{C^s}^{s+1/2}$,
and proves the first estimate.

For $f\in C^s(\R)$, we have
$p_m=\rho_{H^s}^{-1}\,\|\rho_m\|_{H^s(\Omega_m)}^{1/(s+1)}$ and
Lemma~\ref{lem:norm} (i) yields
\[\begin{split}
\sum_{m\in\Z}\,\frac{\norm{f\cdot\rho_m}_{H^s(\Omega_m)}}{p_m^s}
\;&\le\; 2^s \sum_{m\in\Z}\,\frac{1}{p_m^s}\;\norm{f}_{C^s(\Omega_m)}\,
\norm{\rho_m}_{H^s(\Omega_m)}\\
\;&\le\; 2^s\,\norm{f}_{C^s(\R)}\,
\sum_{m\in\Z}\,\frac{\norm{\rho_m}_{H^s(\Omega_m)}}{p_m^s}
\;=\; (2 \rho_{H^s})^{s}\,\norm{f}_{C^s}\,
\sum_{m\in\Z}\,\norm{\rho_m}_{H^s}^{1/(s+1)}\\
\;&=\; 2^s\, {\rho_{H^s}}^{s+1}\,\norm{f}_{C^s(\R)}.
\end{split}\]
Hence
\[
|I_k^\rho(f)- \A_{n,\prob}(f)|
\;\le\; \frac{C_{s,\rho_{H^s}}\,\norm{f}_{C^s(\R)}}{\left(n+\bar k\right)^s}
\]
with $C_{s,\rho}= 2^{3/2}(2\pi)^{s}\,{\rho_{H^s}^{s+1}}$.
This proves the second estimate and completes the proof.
\end{proof}

We are ready to present sharp bounds on the $n$th minimal errors.

\begin{thm} \label{thm:rho}
Consider the integration problem $I^\rho_k$ defined over
the spaces $H^s(\R)$ or $C^s(\R)$ with $s\in\N$ and a nonzero
density $\rho$.
Then for every $n \in \N_0$ and $k \in \R$ we have
\begin{eqnarray*}
 e\big(n, I_k^\rho, H^s(\R)\big)&=&\Theta\left((n+\bar k)^{-s}\right)
\qquad \text{ if }\ \
\left\{\| \rho \|_{C^s(\Omega_m)}\right\}_{m\in\Z}\in\ell_{1/(s+1/2)},\\
 e\big(n, I_k^\rho, C^s(\R)\big)&=&\Theta\left((n+\bar k)^{-s}\right)
\qquad \text{ if }\ \
\left\{\|\rho\|_{H^s(\Omega_m)}\right\}_{m\in\Z}\in\ell_{1/(s+1)},
\end{eqnarray*}
where the factors in the $\Theta$ notation depend only on $s$ and
$\rho$. As always, $\bar k=\max(1,|k|)$.
\qed
\end{thm}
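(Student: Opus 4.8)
The upper bounds are already in hand. By the definition of the $n$th minimal error, $\A_{n,\prob}$ uses at most $n$ function values, so it is an admissible competitor in the infimum, and Theorem~\ref{algrealline} gives
\[
e(n,I_k^\rho,F)\;\le\;\sup_{\|f\|_F\le1}\abs{I_k^\rho(f)-\A_{n,\prob}(f)}\;\le\;\frac{C_{s,\rho}}{(n+\bar k)^s}
\]
for $F\in\{H^s(\R),C^s(\R)\}$, with $C_{s,\rho}$ depending only on $s$ and $\rho$. Thus the whole task is the matching lower bound $e(n,I_k^\rho,F)\ge c_{s,\rho}\,(n+\bar k)^{-s}$ for some positive $c_{s,\rho}$.

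To set up the lower bound I would first localize. Since $\rho$ is nonzero and nonnegative, and is continuous (it lies in $C^s(\R)$ in the first case, and in $H^s(\R)\subset C^0(\R)$ by the Sobolev embedding in the second), there is a fixed interval $\Omega_0=[a_0,b_0]$, independent of $n$ and $k$, and a constant $c_1>0$ with $\rho(x)\ge c_1$ for all $x\in\Omega_0$. Every $f\in H^s_0(\Omega_0)$ (respectively $C^s_0(\Omega_0)$), extended by zero to $\R$, belongs to $H^s(\R)$ (respectively $C^s(\R)$) with unchanged norm, and for such $f$ one has $I_k^\rho(f)=\int_{\Omega_0}f(x)\,\eu^{-ikx}\rho(x)\dx$. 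Restricting the supremum in the definition of the minimal error to this subclass $F_0$ only decreases it, so $e(n,I_k^\rho,F)\ge e(n,I_k^\rho,F_0)$ and it suffices to bound the latter from below.

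I would then reduce $F_0$ to the unweighted, compact-support problem of Theorem~\ref{T13}. For $H^s$ the substitution $g=f\rho$ is convenient: since $\rho\ge c_1>0$ and $\rho\in C^s(\Omega_0)$, both $\rho$ and $1/\rho$ are $C^s$ multipliers on $\Omega_0$, the map $f\mapsto f\rho$ is a bijection of $H^s_0(\Omega_0)$ preserving the boundary conditions (by the Leibniz rule together with $\rho\ne0$), and Lemma~\ref{lem:norm}(i) gives the two-sided equivalence $\norm{f\rho}_{H^s(\Omega_0)}\asymp\norm{f}_{H^s(\Omega_0)}$ with constants depending only on $\rho$ and $s$. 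As $I_k^\rho(f)=I_k(g)$ and the data $f(x_j)$ and $g(x_j)$ determine one another, the weighted problem is equivalent to $I_k$ on $H^s_0(\Omega_0)$ up to a constant factor, and the lower bound of Theorem~\ref{T13} applies, because for the fixed interval $\Omega_0$ one has $(n/|\Omega_0|+\bar k/2\pi)^{-s}=\Theta\bigl((n+\bar k)^{-s}\bigr)$. For $C^s$ this change of unknown is unavailable, since $\rho$ is assumed only in $H^s(\R)$ and $f\rho$ need not lie in $C^s$; instead I would reuse the sign-aligned fooling function underlying the lower bound of Theorem~\ref{T13}. That function is a sum of $\Theta(n+\bar k)$ bumps of width $\Theta\bigl(1/(n+\bar k)\bigr)$, supported in subintervals of $\Omega_0$ that avoid the at most $n$ sample nodes and phased against $\eu^{-ikx}$ so that the integrand has real part bounded below on each bump; multiplying by $\rho\ge c_1$ multiplies the resulting integral by at least $c_1$ while leaving the norm unchanged, so the same $\Theta\bigl((n+\bar k)^{-s}\bigr)$ bound survives.

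The main obstacle is the interplay between the oscillatory factor $\eu^{-ikx}$ and the weight $\rho$: one must keep the oscillation from cancelling the contribution of the test function, which is exactly why the fooling function has to be phased against $\eu^{-ikx}$ and why the bump widths are tied to $1/(n+\bar k)$ rather than $1/n$. A secondary technical point, flagged above, is that in the $C^s$ case $\rho$ is only $H^s$, hence not invertible as a $C^s$ multiplier, so the clean substitution $g=f\rho$ must be replaced by the direct construction. Since $\Omega_0$ and $c_1$ are fixed once and for all, every constant produced in the argument is absorbed into the $s$- and $\rho$-dependent factor of the $\Theta$.
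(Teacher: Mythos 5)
Your proposal is correct and, in substance, follows the paper's route: the upper bound is read off from Theorem~\ref{algrealline} exactly as the paper does, and the lower bound rests on localizing to a fixed interval $\Omega_0$ where the continuous, nonzero, nonnegative $\rho$ is bounded below by some $c_1>0$ and then running the compact-support lower-bound machinery there -- which is precisely what the paper means by ``as in the proof of Theorem~9 in \cite{NUW13}, using only that $\rho$ is continuous and different than zero.'' The one place you genuinely deviate is the $H^s$ case, where instead of re-running the fooling-function construction against the weight $\eu^{-ikx}\rho$ you substitute $g=f\rho$ and invoke the already-proved lower bound of Theorem~\ref{T13} on $H^s_0(\Omega_0)$; this is a clean shortcut, and it is legitimate because $\rho\ge c_1$ on $\Omega_0$ makes $1/\rho$ a $C^s$ multiplier there, Lemma~\ref{lem:norm}(i) gives the two-sided norm equivalence, the boundary conditions defining $H^s_0(\Omega_0)$ are preserved in both directions, and function values of $f$ and $g$ at any node determine each other. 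What the substitution buys is that no new construction is needed for $H^s$; what the paper's direct construction buys is uniformity -- the same argument covers $H^s$ and $C^s$ at once, whereas you correctly observe that for $C^s(\R)$ the substitution is unavailable (there $\rho$ is only assumed in $H^s$) and fall back on the sign-aligned bumps, where the only care needed is that $\mathrm{Re}\bigl(f(x)\,\eu^{-ikx}\bigr)$ be pointwise nonnegative on the support of each bump so that multiplying by $\rho\ge c_1$ can only help. No gaps.
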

\begin{proof}
The proof of lower bounds can be done
as in the proof
of Theorem~9 in \cite{NUW13}.
We only use the fact that $\rho$ is continuous
and different than zero and conclude that
$$
e(n, I^\rho_k, F) \ge c_{\rho,s} (n+ \bar k )^{-s}\ \ \
\mbox{for}\ \  F=H^s(\R) \ \ \mbox{and}\ \ F=C^s(\R).
$$
Note that for $n=0$ we have a lower bound on the initial error.
The upper bounds are attained by the algorithm $\mathcal{A}_{n,\prob}$
whose error is bounded in Theorem~\ref{algrealline}.
\end{proof}
The assumptions on $\rho$ in Theorem~\ref{thm:rho} for
$H^s(\R)$ and $C^s(\R)$ differ in the
conditions on the decay of $\rho$ at infinity.
One reason for this difference is that
the space $C^s(\R)$ does not guarantee any integrability property.
We did not try to find optimal conditions on~$\rho$.

The essence of Theorem~\ref{thm:rho} is that
if $\rho$ is smooth enough and decays fast enough at infinity, we see
that the $n$th minimal errors for the integration problem
$I_k^\rho$ for the spaces $H^s(\R)$ and $C^s(\R)$
are of the same order and may be different only in the
dependence on $s$ in the factors in the $\Theta$ notation.

We now rewrite Theorem~\ref{thm:rho} in terms of the information complexities
similarly as it was done in Corollary~\ref{cor:compl1}.
\begin{cor}\label{cor:compl11}
Consider the integration problem $I_k$ for $k\in\R$ defined for
the space $F\in \{H^s(\R),C^s(\R)\}$
with $s\in\N$. Then
\begin{eqnarray*}
n^{\rm abs}\bigl(\eps,I_{k}, F\bigr)\,&=&\,
\left\lceil\left(
\Theta\left(\left(\frac1{\e}\right)^{1/s}\right)-\bar k\right)_+
\right\rceil,\\
n^{\rm nor}\bigl(\eps,I_{k}, F\bigr)\,&=&\
\left\lceil\,\bar k\left(
\Theta\left(
\left(\frac1{\e}\right)^{1/s}\right)-1\right)_+
\right\rceil,
\end{eqnarray*}
where the factors in the $\Theta$ notations depend only on $s$
and $\rho$.
\qed
\end{cor}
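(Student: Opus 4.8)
The plan is to imitate the proof of Corollary~\ref{cor:compl1}, substituting the two-sided estimate of Theorem~\ref{thm:rho} for the exact error formula of Theorem~\ref{T13}. First I would unfold the $\Theta$ notation: by Theorem~\ref{thm:rho} there are constants $0<c\le C$, depending only on $s$ and $\rho$, such that
\[
c\,(n+\bar k)^{-s}\;\le\; e(n,I_k^\rho,F)\;\le\; C\,(n+\bar k)^{-s}
\qquad\text{for all } n\in\N_0 .
\]
The entire argument uses only these inequalities (for $n=0$ they also pin down the initial error) together with the definitions of $n^{\rm abs}$ and $n^{\rm nor}$.

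For the absolute criterion, recall $n^{\rm abs}(\eps)=\min\{n\in\N_0:e(n,I_k^\rho,F)\le\eps\}$. The lower estimate shows that $e(n,I_k^\rho,F)\le\eps$ forces $n+\bar k\ge(c/\eps)^{1/s}$, hence $n^{\rm abs}(\eps)\ge\lceil((c/\eps)^{1/s}-\bar k)_+\rceil$; the upper estimate shows that $n+\bar k\ge(C/\eps)^{1/s}$ already guarantees $e(n,I_k^\rho,F)\le\eps$, hence $n^{\rm abs}(\eps)\le\lceil((C/\eps)^{1/s}-\bar k)_+\rceil$. To convert this sandwich into the single expression claimed, I would set $N:=n^{\rm abs}(\eps)$ and use that $e(N,I_k^\rho,F)\le\eps$ while (for $N\ge1$) $e(N-1,I_k^\rho,F)>\eps$ by minimality; these give $N+\bar k\ge(c/\eps)^{1/s}$ and $N-1+\bar k<(C/\eps)^{1/s}$, so the interval $(N-1+\bar k,\,N+\bar k]$ meets $[(c/\eps)^{1/s},(C/\eps)^{1/s}]$. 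Picking any $\gamma=\gamma(\eps,k)$ in that intersection (and $\gamma=(c/\eps)^{1/s}\le\bar k$ in the boundary case $N=0$) yields $n^{\rm abs}(\eps)=\lceil(\gamma-\bar k)_+\rceil$ with $\gamma=\Theta((1/\eps)^{1/s})$ and factors depending only on $s,\rho$, which is the first formula.

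For the normalized criterion I would invoke the identity
\[
n^{\rm nor}(\eps,I_k^\rho,F)=n^{\rm abs}\bigl(\eps\,e(0,I_k^\rho,F),\,I_k^\rho,F\bigr),
\]
immediate from \eqref{eq:normalized}. The case $n=0$ of Theorem~\ref{thm:rho} gives $e(0,I_k^\rho,F)=\Theta(\bar k^{-s})$, so the effective threshold satisfies $\bigl(1/(\eps\,e(0,I_k^\rho,F))\bigr)^{1/s}=\Theta\bigl(\bar k\,(1/\eps)^{1/s}\bigr)=\bar k\,\Theta((1/\eps)^{1/s})$. Substituting this for $\gamma$ in the absolute formula and pulling the positive factor $\bar k$ out of $(\cdot)_+$ gives
\[
n^{\rm nor}(\eps,I_k^\rho,F)=\Bigl\lceil\bigl(\bar k\,\Theta((1/\eps)^{1/s})-\bar k\bigr)_+\Bigr\rceil
=\Bigl\lceil\bar k\bigl(\Theta((1/\eps)^{1/s})-1\bigr)_+\Bigr\rceil ,
\]
as asserted, again with factors depending only on $s$ and $\rho$.

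The one step that is more than bookkeeping is the passage from the two-sided sandwich to the single-$\Theta$ formula: one must handle the ceiling and the positive-part truncation carefully and check that the relevant intervals genuinely overlap. Everything else is a direct transcription of the reasoning already carried out for Corollary~\ref{cor:compl1}; in particular, since Theorem~\ref{thm:rho} supplies both the upper and the lower error bounds, there is no lower bound left to prove here.
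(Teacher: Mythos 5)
Your proposal is correct and follows essentially the same route as the paper, which proves this corollary only implicitly by pointing to the argument of Corollary~\ref{cor:compl1}: invert the two-sided error bound of Theorem~\ref{thm:rho} for the absolute criterion, then use $n^{\rm nor}(\eps,I_k^\rho,F)=n^{\rm abs}(\eps\,e(0,I_k^\rho,F),I_k^\rho,F)$ together with $e(0,I_k^\rho,F)=\Theta(\bar k^{-s})$ for the normalized one. Your explicit treatment of the sandwich-to-single-$\Theta$ step (checking that the interval $(N-1+\bar k,\,N+\bar k]$ meets $[(c/\eps)^{1/s},(C/\eps)^{1/s}]$) is a correct elaboration of a detail the paper leaves to the reader.
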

Hence, modulo the dependence on $s$ in the factors in $\Theta$ notations,
the information complexities for
the spaces $H^s(\R)$ and $C^s(\R)$ are the same. The parameter
$\bar k$ helps for the absolute error criterion  although it does not
change the asymptotic behaviour of the information complexity
when $\e$ goes to zero. The parameter $\bar k$ plays a different role
for the normalized error criterion since the information complexity is
now proportional to $\bar k$.
\noindent
\subsubsection*{Acknowledgement}
S. Zhang is sincerely thankful to Theoretical Numerical Analysis Group
at Universit\"at Jena since the
work was partly done when he was visiting this group.

E. Novak is supported by the DFG-Priority Program 1324. M. Ullrich is supported by the DFG-GRK 1523.
H. Wo\'zniakowski is
supported by the National Science Centre, Poland, based on the
decision DEC-21013/09/B/ST1/04275.
S. Zhang is
supported by the NNSF of China
(11301002), Anhui Provincial Natural Science Foundation (1408085QF107)
and Talents Youth Fund of Anhui Province Universities
(2013SQRL006ZD).

\end{document}